\newskip\stdskip                      
\newtheorem{thm}{Theorem}[section]
\newtheorem{cor}[thm]{Corollary}
\newtheorem{lem}[thm]{Lemma}
\newtheorem{prop}[thm]{Proposition}
\theoremstyle{definition}
\newtheorem{defin}[thm]{Definition}
\theoremstyle{definition}
\newtheorem{exm}[thm]{Example}
\newtheorem{remark}[thm]{Remark}
\theoremstyle{remark}
\newtheorem*{rem}{Remark}
\newtheorem*{ack}{Acknowledgment}
\def\co{\colon\thinspace}
\def\wt{\widetilde}
\newcommand\numberthis{\addtocounter{equation}{1}\tag{\theequation}}
\begin{document}
\title{On products in a real moment-angle manifold}

\author{Li Cai}
\address{School of Mathematics and Systems Science, 
Chinese Academy of Sciences, Beijing 100190, China}
\email{l-cai@amss.ac.cn}
\subjclass[2010]{Primary 55N45; Secondary 57Q15, 32S22}

\keywords{Cup and cap products, Real moment-angle manifolds, 
Subspace arrangements}
\begin{abstract}    
In this paper we give a necessary and sufficient condition for
a (real) moment-angle complex to be a topological manifold. 
The cup and cap products in a real moment-angle manifold are studied: 
the Poincar\'{e} duality via
cap products is equivalent to the Alexander duality of the defining 
complex $K$. Consequently, the cohomology ring (with coefficients integers) 
of a polyhedral product by pairs of disks and their bounding spheres 
is isomorphic to that of a differential graded algebra associated 
to $K$, and the dimensions of the disks.
\end{abstract}

\maketitle

\section{Introduction}
Let $m$ be a positive integer, and let $K$ be an 
\emph{abstract simplicial complex} with vertex set 
$[m]=\{1,2,\ldots,m\}$. Thus $\emptyset\in K$, and any subset of 
$\sigma\subset[m]$ is a simplex of $K$, if $\sigma$ is. We use 
the notation $|K|$ for the geometric realization of $K$.
Denote by $(\underline{X},\underline{A})$ $m$ pairs of topological spaces 
$(X_i,A_i)$, $i=1,2,\ldots,m$, and let $\prod_{i=1}^m X_i$ 
be the $m$-fold Cartesian product of $X_i$. 
For $x=(x_i)_{i=1}^m\in \prod_{i=1}^m X_i$, define 
\begin{equation*}
  \sigma_x=\{i\in [m]\mid x_i\in X_i\setminus A_i\},
\end{equation*}
then the corresponding
\emph{polyhedral product} (following \cite{BBCG10a} and \cite{BP15}) is given by
\begin{equation}
  (\underline{X},\underline{A})^K=\{x\in \prod_{i=1}^m X_i\mid \sigma_x\in K\}.
  \label{def:pp}
\end{equation}
If all pairs $(X_i,A_i)$ are homeomorphic to a given
one, $(X,A)$, then $(\underline{X},\underline{A})^K$ shall be denoted 
by $(X,A)^K$.

$(D^1,S^0)^K$ is referred to as a \emph{real moment-angle complex} 
(cf.~\cite[Section 6.6]{BP02}). If it is a topological manifold, then 
we call it a \emph{real moment-angle manifold}.
Following \cite{BP02}, \cite{GL13} and \cite{BBCG12}, in this paper we 
focus on two problems:
\begin{enumerate}
  \item [(P-1)] the characterization of a real moment-angle manifold, and 
  \item [(P-2)] cup and cap products in its (co)homology.
\end{enumerate}

(P-1) was answered by M.~Davis, under the assumption that 
$K$ is a \emph{flag complex}, i.e., any set of vertices of $K$ that are pairwise
connected by edges spans a simplex of $K$ 
(cf.~\cite[Theorem 10.6.1, p.~197]{Dav08}).\footnote{The assumption implies 
  that $(D^1,S^0)^K$ is \emph{aspherical}, whose fundamental group is isomorphic
  to the commutator subgroup of the associated right-angled Coxeter group; 
  see \cite[pp.~11--12]{Dav08}.} 
It follows that 
$(D^1,S^0)^K$ is a topological $n$--manifold if and only if 
$|K|$ is a \emph{generalized homology} $(n-1)$--sphere (i.e., 
a homology $(n-1)$--manifold
having the homology of an $(n-1)$--sphere), with $\pi_1(|K|)=1$ when 
$n\not=1,2$.

In Section \ref{sec:Davis} we shall prove that Davis's 
characterization theorem still holds, without assuming the flagness of $K$ 
(see Theorem \ref{thm:Davis}). Together with the 
construction in \cite{BBCG10b} given by Bahri, Bendersky, Cohen and Gitler
(see Definition \ref{def:BBCG}),\footnote{This construction 
was intensely used in \cite{LdM89} and \cite{GL13} in the language
of simple polytopes.}
it follows that a \emph{moment-angle complex} 
$(D^2,S^1)^K$ is a topological ($n+m$)--manifold if and only if $|K|$
is a generalized homology $(n-1)$--sphere. 

Now we turn to (P-2). Unless otherwise stated, we always suppose 
that all coefficients taken in (co)homology groups are integers.

The additive structure of $H^*( (D^1,S^0)^K)$ is 
well-known.\footnote{See for example, 
 \cite{Dav83}, \cite{LdM89} from reflection 
groups, \cite{BBCG10a} from homotopy theory, also the 
Goresky--MacPherson Formula \cite{GM88} together with 
the fact that $(D^1,S^0)^K$ is the 
deformation retract of the \emph{coordinate subspace arrangement 
complement} 
$(\mathbb{R},\mathbb{R}\setminus\{0\})^K$ 
(see \cite[Theorem 8.9]{BP02}).}
It turns out that, we have isomorphisms
\begin{equation}
  H^p( (D^1,S^0)^K)\cong\bigoplus_{\omega\subset [m]}H^{p-1}(K_{\omega})
  \label{iso:coho}
\end{equation}
in all dimensions $p\geq 0$, where $K_{\omega}$ is the \emph{full subcomplex} 
with respect to $\omega$.

Let $J=(j_i)_{i=1}^m$ be an $m$-tuple of positive integers, 
and denote by $(\underline{D},\underline{S})^K$ the polyhedral
product with respect to pairs $(D^{j_i},S^{j_i-1})$ of $j_i$-disks
and their bounding spheres, $i=1,2,\ldots,m$.
With a general approach from homotopy theory, 
the information of the 
ring $H^*( (D^1,S^0)^K)$ is given in \cite{BBCG12}, as well as 
its relation with other polyhedral products, including $(\underline{D},\underline{S})^K$. 

Our approach here follows that of 
Baskakov, Buchstaber and Panov \cite{BBP04} and Panov \cite{Pan08} on 
$H^*( (D^2,S^1)^K)$:
we will show that 
$H^*( (D^1,S^0)^K)$ is also isomorphic to the cohomology a 
differential graded algebra $R^*_K$, which
is \emph{not} commutative in any sense (see \eqref{eq:relation1}, 
\eqref{eq:relation2}). 
Technically, we use the result of Whitney \cite{Whi38} on
the cup and cap products in a Cartesian product of simplicial 
complexs, and prove that it applies to the situation here 
(see Theorem \ref{thm:Whitney}).
Together with the method from \cite{BBCG12}, 
we will show that the ring
$H^*( (\underline{D},\underline{S})^K)$
can be understood uniformly (see Theorem \ref{thm:iso}, Remark \ref{rem:iso}).

In the language of the intersections of 
submanifolds, rules for the cup products were understood by 
Gitler and L\'{o}pez de Medrano \cite{GL13}. 
We shall follow their approach to make a comparison of 
the two rings $H^*((D^1,S^0)^K)$ 
and $H^*((D^2,S^1)^K)$ in Example \ref{exm:GL13} 
(compare \cite{GPW01}).

The paper is organized as follows.
Section \ref{sec:Davis} is devoted to the characterization
of a real moment-angle manifold; as a corollary, the 
characterization of a moment-angle manifold is given in 
Subsection \ref{subs:app1}. The cup and cap products in 
a real moment-angle complex, 
based on Whitney's formulae \eqref{eq:cup} and \eqref{eq:cap}, 
is given in Section \ref{sec:Whitney};  
while the proof the main theorem, Theorem \ref{thm:Whitney}, 
is postponed to Section \ref{proof:Whitney}. 
Most of the explicit
calculations are taken in Section \ref{app:Whitney}, based on
a special cellular (co)chain complex given in Subsection 
\ref{ss:rmac}.\footnote{The cellular (co)chain complex here 
  is used by Choi and Park \cite{CP13} 
  to show that, any odd torsion can appear 
  in the cohomology of a 
  \emph{real toric manifold} or a \emph{small cover}.}
In Section \ref{sec:coho}, we describe explicitly 
the cup products in the polyhedral product
$(\underline{D},\underline{S})^K$, via the multiplication in 
the corresponding differential graded algebra (see Theorem \ref{thm:iso}). 
Section \ref{proof:Whitney} is devoted to 
the proof of Theorem \ref{thm:Whitney}.

\begin{ack}
 I was led to the characterization of (real) moment-angle manifolds 
after discussions with Taras Panov and Hiroaki Ishida. 
Theorem \ref{thm:iso} follows the suggestions of Tony Bahri and Samuel Gitler. 
Finally, I would like to thank Osamu Saeki for the advice, guidance 
and many helpful comments. 

The author was supported in part by JSPS KAKENHI Grant Number 
23244008.
\end{ack}

\section{Davis's characterization theorem}\label{sec:Davis}
In this section, all manifolds are assumed to have 
no boundaries.

The dimension of a simplex $\sigma\in K$ is given by  
$\mathrm{card}(\sigma)-1$. Thus $\mathrm{dim}(\sigma)=-1$ 
if and only if $\sigma=\emptyset$.
Let $K'$ be the \emph{derived complex} of $K$, where 
a $k$-simplex is a chain $(\sigma_0,\sigma_1,\ldots,\sigma_k)$ 
of simplices in $K$, each of \emph{non-negative} dimension,
such that $\sigma_i\subset\sigma_{i+1}$ is a proper face, 
$i=0,1,\ldots, k-1$. 
Clearly $|K'|$ is the barycentric subdivision of $|K|$. 
Let $K'_+$ be the \emph{augmentation}
of $K'$: the dimension of the starting simplex in each 
chain of $K'_+$ can be negative. Denote by $|K'_+|$ the cone over $|K'|$, with the 
collapsed end point corresponding to $(\emptyset)$. (While 
$\emptyset\in K$ has no geometric meanings in $|K|$.) 
In what follows, we treat $|K'|$ as a subcomplex of $|K'_+|$.

Let $X_K$ be the intersection of $(D^1,S^0)^K$ with the 
first orthant of $\mathbb{R}^m$, namely the set $\{(x_i)_{i=1}^m\in\mathbb{R}^m
\mid x_i\geq 0, i=1,2,\ldots,m\}$. 
By \eqref{def:pp}, $X_K$ can be decomposed as the union of cubes 
\begin{equation}
  \bigcup_{\sigma\in K}C_{\sigma}; \quad C_{\sigma}=\prod_{i=1}^mY_i, \ 
  Y_i=\begin{cases}
	[0,1] & \text{if $i\in \sigma$,}\\
	\{1\} & \text{otherwise.}
  \end{cases}
  \label{def:cube}
\end{equation}

  \begin{lem}[{cf.~\cite[Chapter 4, pp.~53--55]{BP02}}] \label{lem:tri}
  Let $\varphi\co K\to X_K$ be the map sending each simplex $\sigma$ to 
  the point $\{x_i\}_{i=1}^m$, where $x_i=0$ if $i\in \sigma$ and 
  $x_i=1$ if otherwise. Then the mapping $\varphi'\co |K'_+|\to X_K$  
  which sends each $k$-simplex $|(\sigma_0,\sigma_1,\ldots,\sigma_k)|$ to the
  linear simplex spanned by $\varphi(\sigma_0),\varphi(\sigma_1),\ldots,\varphi(\sigma_k)$,
  yields a triangulation of $X_K$.
\end{lem}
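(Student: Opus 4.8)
The plan is to reduce the global statement to the classical triangulation of a single cube and then to verify that the resulting cube-triangulations fit together coherently. First I would record the combinatorics of the decomposition \eqref{def:cube}. For $\sigma,\tau\in K$ one has $C_\sigma\cap C_\tau=C_{\sigma\cap\tau}$, since a coordinate is unconstrained in the intersection exactly when it lies in both $\sigma$ and $\tau$; as $\sigma\cap\tau\in K$, the family $\{C_\sigma\}_{\sigma\in K}$ is a cubical complex whose face poset is $(K,\subseteq)$. The vertices of $C_\sigma$ are precisely the points $\varphi(\tau)$ with $\tau\subseteq\sigma$: the $0$--$1$ vector $\varphi(\tau)$, which vanishes on $\tau$ and equals $1$ elsewhere, is a vertex of $C_\sigma$ iff $\tau\subseteq\sigma$ (in particular $\varphi(\emptyset)=(1,\ldots,1)$ is a common vertex of every cube, matching the cone point of the augmentation $K'_+$). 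Hence a simplex $|(\sigma_0,\sigma_1,\ldots,\sigma_k)|$ of $K'_+$ is carried by $\varphi'$ into $C_\sigma$ exactly when $\sigma_k\subseteq\sigma$, and its image is a genuine $k$--simplex: since $\varphi(\sigma_i)-\varphi(\sigma_0)=-\chi_{\sigma_i\setminus\sigma_0}$ and the supports $\sigma_1\setminus\sigma_0\subsetneq\cdots\subsetneq\sigma_k\setminus\sigma_0$ strictly increase, these vectors are linearly independent, so $\varphi'$ is a linear embedding on each closed simplex.

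Next I would identify the triangulation of a fixed cube. Taking $\sigma$ with $\mathrm{card}(\sigma)=n$ and applying the reflection $x_i\mapsto 1-x_i$ on the coordinates in $\sigma$, the vertex $\varphi(\tau)$ becomes the characteristic vector $\chi_\tau$, and a maximal chain $\emptyset\subsetneq\cdots\subsetneq\sigma$ is sent to the simplex with vertices $0,\,e_{i_1},\,e_{i_1}+e_{i_2},\ldots,\mathbf{1}$, that is, to the Kuhn simplex $\{1\geq y_{i_1}\geq\cdots\geq y_{i_n}\geq 0\}$. The $n!$ such simplices are exactly the standard (Freudenthal--Kuhn) triangulation of $[0,1]^n$, and the sub-chains give its faces; so the simplices of $K'_+$ lying in $C_\sigma$ realize this triangulation, in particular they cover $C_\sigma$ and meet in common faces there. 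Together with the cube decomposition this already yields surjectivity of $\varphi'$ onto $X_K$.

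The remaining, and main, point is coherence across different cubes: I must rule out that the separate cube-triangulations disagree on a shared face, so that the images of the simplices of $K'_+$ form a bona fide geometric simplicial complex and $\varphi'$ is injective. For $\tau\subseteq\sigma$ the cube $C_\tau$ is the face of $C_\sigma$ cut out by $y_i=0$ for $i\in\sigma\setminus\tau$, and the Freudenthal--Kuhn triangulation restricts to any coordinate face as its own Freudenthal--Kuhn triangulation; hence the subdivision that $C_\sigma$ induces on $C_\tau$ agrees with the intrinsic one coming from the chains contained in $C_\tau$. Combined with $C_\sigma\cap C_{\sigma'}=C_{\sigma\cap\sigma'}$ from the first step, this shows that any two image simplices meet in the image of a common face of $K'_+$, so $\varphi'$ is injective. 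Being a continuous bijection from the compact space $|K'_+|$ onto the Hausdorff space $X_K$, it is a homeomorphism, and therefore the required triangulation. I expect this coherence step to be the only real obstacle, the per-cube statement being classical; the work lies in checking that the single global rule $\tau\mapsto\varphi(\tau)$ induces matching subdivisions along every shared face.
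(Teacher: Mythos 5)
Your proof is correct and follows essentially the same route as the paper's: both reduce the problem to the standard (Freudenthal--Kuhn, i.e.\ product-of-simplices) triangulation of each cube $C_\sigma$ via maximal chains, the paper citing Eilenberg--Steenrod for this per-cube step. The only difference is one of detail: you spell out the gluing coherence (the cubical-complex identity $C_\sigma\cap C_{\sigma'}=C_{\sigma\cap\sigma'}$ and the compatibility of the induced subdivisions on shared faces) that the paper compresses into its final sentence, which is a legitimate and worthwhile elaboration rather than a different argument.
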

\begin{proof}
  Suppose that $\sigma\in K$ is a $k$-simplex. 
  Let $K'_{\leq \sigma}\subset K'_+$ be the subcomplex 
  with maximal chains starting from $\emptyset$ and ending 
  with $\sigma$, i.e., each is of the form
  \[ \sigma'=(\sigma_{-1},\sigma_0,\ldots,\sigma_k)\]
  with $\mathrm{dim}(\sigma_i)=i$, such that $\sigma_k=\sigma$.
  For $i=-1,0,\ldots, k$, suppose $\varphi(\sigma_{i})=(x_{i,j})_{j=1}^m$. 
  By definition, $x_{i.j}$ has value of $0$ or $1$,  
  $x_{i,j}\geq x_{i+1,j}$ and two adjacent points  
  $\varphi(\sigma_i)$ and $\varphi(\sigma_{i+1})$ 
  have Euclidean distance $1$. 
  We see that $\varphi'(|\sigma'|)$ is spanned by lattice points 
  on a path connecting the two end-points on the diagonal of a cube.
  Then the standard way of triangulating a product
  of simplices implies that (cf.~Eilenberg and Steenrod \cite[p.~68]{ES52}),
  the restriction $\varphi'|_{|K'_{\leq \sigma}|}$ triangulates $C_{\sigma}$. 
  In this way $X_K$ is triangulated by $\varphi'$.
  \end{proof}

  Let $\mathbb{R}^m$ be endowed with the $\mathbb{Z}_2^m$-action 
  generated by $\{s_i\}_{i=1}^m$, with $s_i$ the reflection changing
  the sign of the $i$-th coordinate of each point. Denote by
  $X_i$ the subspace of $X_K$ fixed by $s_i$, $i=1,2,\ldots,m$.
  It can be checked that the inverse image of $X_i$ under 
  $\varphi'$ in the lemma above is the star of 
  $|(i)|$ in $|K'|$. Suppose that $Y_i=(\varphi')^{-1}(X_i)$. 
  Clearly $|K'|=\bigcup_{i=1}^mY_i$, and for $x\in |K'_+|$, 
  denote by $I_x$ the set $\{i\in [m]\mid x\in Y_i\}$ 
  (which can be empty).
  Let $\mathcal{U}(\mathbb{Z}_2^m,|K'_+|)$ be the \emph{basic
  construction} with respect to $\mathbb{Z}_2^m$, $|K'_+|$ 
  and $\{Y_i\}_{i=1}^m$ (cf.~\cite[Chapter 5]{Dav08}), 
  which is a $\mathbb{Z}_2^m$-space
  given by
  \begin{equation}
	\mathcal{U}(\mathbb{Z}_2^m,|K'_+|)=\left(\mathbb{Z}_2^m\times |K'_+|\right)/\sim,
	\label{def:U}
  \end{equation}
  where $(g,x)\sim (g',x')$ if and only if $x=x'\in |K'|$ and 
  $g^{-1}g'\in \langle s_i\rangle_{i\in I_x}$ ($g=g'$ if $I_x=\emptyset$),
  and the action follows $g'[g,x]=[g'g,x]$.
  It can be checked directly that the map
    \begin{equation}
   \begindc{\commdiag}[15]
   \obj(0,1)[aa]{$u_{\varphi'}\co \mathcal{U}(\mathbb{Z}_2^m,|K'_+|)$}
   \obj(7,1)[bb]{$ (D^1,S^0)^K$}
   \obj(0,0)[cc]{$[g,x]$}
   \obj(7,0)[dd]{$g \varphi'(x)$,}
   \mor{aa}{bb}{}
   \mor{cc}{dd}{}[+1,6]
 \enddc\label{def:uphi}
 \end{equation}
 is a homeomorphism preserving the $\mathbb{Z}_2^m$-actions
 on both sides. Then it follows from Lemma \ref{lem:tri} that 
 $u_{\varphi'}$ triangulates $(D^1,S^0)^K$, such that $|K'|$
 appears as the link of $\varphi'(|(\emptyset)|)=(1,1,\ldots,1)$.

 In what follows, a \emph{polyhedron} is a subset of Euclidean space, 
 in which each point has a neighborhood being a compact and 
 linear cone. It is well-known that a polyhedron can be triangulated.
 \begin{defin}\label{def:sphere}
A triangulated polyhedron $X$ is a \emph{homology $n$--manifold}
if the link of each $p$-simplex, $0\leq p\leq n$, 
has the homology of an $(n-1-p)$--sphere.\footnote{This is equivalent to 
  the definition using local homology groups, see 
  \cite[Exercise 64.1, p.~377]{Mun84}.} A homeomorphism $f$
  between two polyhedra is \emph{piecewise linear} 
  (abbreviated $\mathrm{PL}$), if 
  there exist suitable triangulations on both sides such that 
  $f$ is simplicial.

A polyhedral homology $n$--manifold 
$X$ is called a \emph{generalized homology $n$--sphere} (resp.~a $\mathrm{PL}$ $n$--sphere) 
if it has the homology of an $n$--sphere (resp.~is $\mathrm{PL}$ homeomorphic to 
the boundary of an $(n+1)$-simplex). 
$X$ (triangulated) is a \emph{piecewise linear $n$--manifold} if the 
 link of each vertex is a $\mathrm{PL}$ $(n-1)$--sphere. 
 \end{defin}

Let $(D^1,S^0)^K$ be equipped with the triangulation $u_{\varphi'}$ 
(see \eqref{def:uphi}).
We see that the necessary condition for $(D^1,S^0)^K$ to be a homology $n$--manifold 
(resp.~a $\mathrm{PL}$ $n$--manifold) is that $|K'|$ is a generalized homology
$(n-1)$--sphere (resp.~a $\mathrm{PL}$ $(n-1)$--sphere), where we can replace
$|K'|$ by $|K|$ since they are $\mathrm{PL}$ homeomorphic to each other.
In fact these conditions are also sufficient:
\begin{thm}\label{thm:Davis}
 The real moment-angle complex $(D^1,S^0)^K$ is a homology
$n$--manifold (resp.~a $\mathrm{PL}$ $n$--manifold), if and only if 
$|K|$ is a generalized homology $(n-1)$--sphere (resp.~a 
$\mathrm{PL}$ $(n-1)$--sphere); 
suppose that $(D^1,S^0)^K$ is a homology $n$--manifold,
then it is a topological manifold if and only if $|K|$ is simply 
connected when $n\geq 3$.
\end{thm}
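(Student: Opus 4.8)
The plan is to reduce the entire statement to one computation—the link of an arbitrary point of $(D^1,S^0)^K$ in the triangulation $u_{\varphi'}$ of \eqref{def:uphi}—and then to feed the resulting join formula into the homological, $\mathrm{PL}$, and topological criteria in turn. First I would fix $p=(p_i)_{i=1}^m$ in $(D^1,S^0)^K\subset[-1,1]^m$ and set $I=\{i\mid |p_i|<1\}$, which lies in $K$ by \eqref{def:pp}. Working in the cubical model, equivalently in the basic construction \eqref{def:U}, the coordinates $i\in I$ are free and contribute an $\mathbb{R}^{|I|}$-factor---for those with $p_i=0$ this full freedom is exactly what the reflection across the mirror $x_i=0$ produces---whereas a coordinate with $|p_i|=1$ contributes an inward direction that is admissible precisely when its index may be adjoined to $I$ inside $K$, so that the inward directions assemble into $\mathrm{Cone}(|\mathrm{link}_K I|)$. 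Hence a neighbourhood of $p$ is $\mathrm{PL}$ homeomorphic to $\mathbb{R}^{|I|}\times\mathrm{Cone}(|\mathrm{link}_K I|)$, giving
\begin{equation*}
  \mathrm{link}(p)\cong S^{|I|-1}\ast|\mathrm{link}_K I|\cong\Sigma^{|I|}|\mathrm{link}_K I|,
\end{equation*}
with the convention that $I=\emptyset$ reads as $\mathrm{link}(p)=|K|$, recovering the link $|K'|\cong|K|$ of the central vertex identified after \eqref{def:uphi}. The careful $\mathrm{PL}$ identification of this local model along the mirrors is, I expect, the main obstacle.

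Granting the formula, the homological and $\mathrm{PL}$ equivalences are formal. A join of homology (resp.\ $\mathrm{PL}$) spheres is again one, with $S^a\ast S^b=S^{a+b+1}$ and $\widetilde H_k(S^a\ast X)\cong\widetilde H_{k-a-1}(X)$; since $|I|=\dim I+1$, the link of $p$ is a homology (resp.\ $\mathrm{PL}$) $(n-1)$-sphere exactly when $|\mathrm{link}_K I|$ is a homology (resp.\ $\mathrm{PL}$) $(n-1-|I|)$-sphere. Letting $I$ range over $K$, the nonempty simplices say precisely that $|K|$ is a homology (resp.\ $\mathrm{PL}$) $(n-1)$-manifold, while the value $I=\emptyset$ contributes that $|K|$ has the homology of $S^{n-1}$ (resp.\ is a $\mathrm{PL}$ $(n-1)$-sphere). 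Using Definition \ref{def:sphere} and the standard point-wise criterion that a polyhedron is a homology $n$-manifold iff every point-link has the homology of $S^{n-1}$, this is exactly the assertion that $(D^1,S^0)^K$ is a homology (resp.\ $\mathrm{PL}$) $n$-manifold iff $|K|$ is a generalized homology $(n-1)$-sphere (resp.\ a $\mathrm{PL}$ $(n-1)$-sphere); this settles both directions at once, the necessity being visible as well from the central vertex as noted before Theorem \ref{thm:Davis}.

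For the topological manifold assertion I would argue vertex by vertex. The vertices of $u_{\varphi'}$ split into the orbit of the central vertex, all with $I=\emptyset$ and link $|K|$, and the orbits of the $\varphi(\sigma)$ with $\sigma\neq\emptyset$, which have $I=\sigma$ and link $\Sigma^{|\sigma|}|\mathrm{link}_K\sigma|$. For $n\geq 3$ the latter links are simply connected: a double or higher suspension is simply connected when $|\sigma|\geq 2$, and when $|\sigma|=1$ the space $|\mathrm{link}_K\sigma|$ is a homology $(n-2)$-sphere with $n-2\geq 1$, hence connected, so its suspension is simply connected. Therefore every vertex link is simply connected iff $|K|$ is, and for $n\geq 5$ Edwards' recognition theorem (cf.~\cite{Dav08})---a polyhedral homology $n$-manifold is a topological manifold iff each vertex link is simply connected---gives that $(D^1,S^0)^K$ is a topological manifold iff $|K|$ is simply connected.

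It remains to dispatch the converse uniformly and the two boundary dimensions. For the converse, valid for every $n\geq 3$: if $(D^1,S^0)^K$ is a topological $n$-manifold and $v$ is a central vertex, its open star is the open cone on $|K|$, so a slice exhibits $|K|$ as a deformation retract of $\mathrm{star}^\circ(v)\setminus\{v\}$; a small Euclidean neighbourhood $U\cong\mathbb{R}^n$ of $v$ has $U\setminus\{v\}\simeq S^{n-1}$ simply connected, and the cone structure realizes $\pi_1(|K|)$ as a retract of $\pi_1(U\setminus\{v\})=1$, so $\pi_1(|K|)=1$. For $n=3$, a homology $3$-manifold has vertex links that are homology $2$-spheres, hence genuine $2$-spheres, so $(D^1,S^0)^K$ is automatically a ($\mathrm{PL}$, hence topological) $3$-manifold and $|K|\cong S^2$ is already simply connected. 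For $n=4$, if $|K|$ is simply connected then, being a homology $3$-sphere, it is a homotopy $3$-sphere, hence $\mathrm{PL}$ homeomorphic to $S^3$ by the Poincar\'e conjecture together with Moise's theorem; the join formula then makes every vertex link a $\mathrm{PL}$ $3$-sphere, so $(D^1,S^0)^K$ is a $\mathrm{PL}$, hence topological, $4$-manifold. The only essential external inputs are Edwards' theorem for $n\geq 5$ and low-dimensional ($3$-manifold) topology in dimension $4$.
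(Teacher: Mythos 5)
Your proposal is correct and follows essentially the same route as the paper: your local computation $\mathrm{link}(p)\cong S^{|I|-1}\ast|\mathrm{Lk}(I,K)|$ is precisely the content of Proposition \ref{lem:neigh} (which the paper proves in the barycentric model $K'_+$, supplying exactly the careful $\mathrm{PL}$ identification along the mirrors that you flag as the main obstacle), and your endgame --- join-of-spheres bookkeeping plus the recognition theorem reducing everything to simple connectivity of the central vertex link $|K|$, with necessity extracted from a cone neighborhood --- matches the paper's appeal to the Edwards--Freedman theorem. The only substantive divergence is at $n=4$, where you invoke Perelman's Poincar\'{e} conjecture together with Moise to make vertex links $\mathrm{PL}$ $3$--spheres; the paper instead cites the combined Edwards--Freedman statement for all $n\geq 3$ (Freedman's work already covering dimension $4$, e.g.\ via the fact that the cone on a homotopy $3$--sphere is a topological manifold), so your extra input is valid but strictly stronger than needed.
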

\begin{remark}\label{rem:smooth}
  The argument of 
  Panov and Ustinovsky \cite[Theorem 2.2]{PU12} 
  can be used directly to prove that
  $(D^1,S^0)^K$ is homeomorphic to a smooth manifold, 
  if $K$ is induced from a \emph{complete simplicial fan} 
  (see also Tambour \cite{Tam12}),  
  including the well-known case that $|K|$ bounds a convex polytope.  
\end{remark}
The famous Edwards--Freedman Theorem (see \cite[Theorem 10.4.10, p. 194]{Dav08} 
and references therein) 
asserts that, a triangulated polyhedral homology $n$--manifold ($n\geq 3$) 
is a topological manifold,
if and only if the link of each vertex is simply connected. 
Therefore, it suffices to check the link of each vertex.

We proceed with the proposition below, whose proof will be given after
that of Theorem \ref{thm:Davis}.
Recall that for two disjoint polyhedra $X$ and $Y$ embedded in $\mathbb{R}^N$, 
their \emph{exterior join exists}, if any two line segments, each joining two points
in $X$ and $Y$ respectively, meet in at most one common endpoint, or 
coincide otherwise. If exists, then their \emph{join} $X*Y$ 
is given by $\{tx+(1-t)y\mid x\in X,\ y\in Y,\ t\in [0,1]\}$.
\begin{prop}\label{lem:neigh}
  With the triangulation $u_{\varphi'}$, the link of
  a vertex in $(D^1,S^0)^K$ is either $|K'|$, or is 
  $\mathrm{PL}$ homeomorphic to the join 
  \begin{equation}
	\underbrace{S^0*S^0*\cdots*S^0}_{k+1}*|\mathrm{Lk}( \sigma',K')|,
	\label{eq:join}
  \end{equation}
  where  $\sigma'\in K'$ is a $k$-simplex ($k\geq 0$),  
  $\mathrm{Lk}( \sigma',K')$ the link of $\sigma'$ in $K'$.
\end{prop}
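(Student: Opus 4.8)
The plan is to classify the vertices of $(D^1,S^0)^K$ under the triangulation $u_{\varphi'}$ and to compute the link of each. By \eqref{def:U} and \eqref{def:uphi} every vertex has the form $[g,w]$ with $g\in\mathbb{Z}_2^m$ and $w$ a vertex of $|K'_+|$, and the vertices $w$ are exactly the cone point $(\emptyset)$ together with the barycentres $\hat{\sigma}$ of the simplices $\sigma\in K$ with $\mathrm{dim}(\sigma)=k\geq 0$. First I would dispose of the cone point: since $(\emptyset)\notin|K'|$ we have $I_{(\emptyset)}=\emptyset$, so by \eqref{def:U} a neighbourhood of $[1,(\emptyset)]$ is a single chamber and its link is the link of the cone point of $|K'_+|=\mathrm{cone}(|K'|)$, namely $|K'|$; this is the statement already recorded after \eqref{def:uphi} that $|K'|$ is the link of $(1,\dots,1)$. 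As $u_{\varphi'}$ is simplicial and $\mathbb{Z}_2^m$-equivariant, every corner vertex $[g,(\emptyset)]$ (the $2^m$ points $g\cdot(1,\dots,1)$) has link $\mathrm{PL}$ homeomorphic to $|K'|$, which is the first alternative.

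For a barycentre $v=\hat{\sigma}$ of a $k$-simplex $\sigma$ I would first determine the reflections fixing $v$. Using that $Y_i$ is the closed star of $|(i)|$ in $|K'|$, one checks $v\in Y_i$ if and only if $i\in\sigma$, so the isotropy group is $W_v=\langle s_i\rangle_{i\in\sigma}\cong\mathbb{Z}_2^{k+1}$ and the link of $[g,v]$ is the local basic construction $\mathcal{U}(W_v,\mathrm{Lk}(v,|K'_+|))$ with the mirror structure induced by the $Y_i$, $i\in\sigma$. The next step is to split this link as a join. In $|K'_+|$ the link of $\hat{\sigma}$ decomposes as $L_{\mathrm{down}}*L_{\mathrm{up}}$, where $L_{\mathrm{down}}$ is the order complex of the proper faces of $\sigma$ together with $\emptyset$, and $L_{\mathrm{up}}$ is the order complex of the simplices of $K$ strictly containing $\sigma$. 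A direction toward a face $\rho\supsetneq\sigma$ lies in every mirror $Y_i$, $i\in\sigma$, while a direction toward a proper face $\tau\subsetneq\sigma$ lies in $Y_i$ exactly when $i\in\tau$ (and the direction toward $\emptyset$ lies in none). Thus each mirror has the form $\hat{M}_i*L_{\mathrm{up}}$, and since $L_{\mathrm{up}}$ lies in all $k+1$ mirrors the reflected copies are glued along it; directly from \eqref{def:U} this gives
\[
  \mathcal{U}\big(W_v,\,L_{\mathrm{down}}*L_{\mathrm{up}}\big)=\mathcal{U}\big(W_v,\,L_{\mathrm{down}}\big)*L_{\mathrm{up}}.
\]

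It then remains to identify the two factors. The complex $L_{\mathrm{down}}$ is $\mathrm{PL}$ homeomorphic to the solid $k$-simplex $\Delta^k$; concretely, by Lemma \ref{lem:tri} it is the link of the corner $\varphi(\sigma)=0$ of the cube $C_{\sigma}=[0,1]^{\sigma}$ of \eqref{def:cube}, and under this identification the mirror $\hat{M}_i$ is the facet of $\Delta^k$ cut out by $\{x_i=0\}$. Reflecting $\Delta^k$ across its $k+1$ facets by $W_v\cong\mathbb{Z}_2^{k+1}$ is precisely the standard tiling of $S^k\subset\mathbb{R}^{k+1}$ by its $2^{k+1}$ orthant-simplices, so $\mathcal{U}(W_v,L_{\mathrm{down}})$ is $\mathrm{PL}$ homeomorphic to $S^k=\underbrace{S^0*\cdots*S^0}_{k+1}$. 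For the other factor, choosing a maximal chain $\sigma'=(\sigma_0\subset\cdots\subset\sigma_k=\sigma)$ with $\mathrm{dim}(\sigma_i)=i$ gives a $k$-simplex of $K'$ whose link in $K'$ consists exactly of the chains of simplices strictly above $\sigma$; hence $|\mathrm{Lk}(\sigma',K')|=L_{\mathrm{up}}$. Combining the three statements yields the asserted join, with the convention $X*\emptyset=X$ covering the case where $\sigma$ is maximal.

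The step I expect to require the most care is the identification $\mathcal{U}(W_v,L_{\mathrm{down}})\cong S^k$ together with the verification that the mirror structure induced on $L_{\mathrm{down}}$ really is the facet structure of a simplicial chamber: one must check that the $Y_i$ restrict to the link of $\hat{\sigma}$ as the facets $\hat{M}_i$ rather than as their closed stars, which is exactly where the closed-star description of $Y_i$ and the product triangulation of Lemma \ref{lem:tri} enter. The commuting-with-join identity likewise rests on the observation that $L_{\mathrm{up}}$ lies in the intersection of all $k+1$ mirrors, so that no reflection of $W_v$ acts on it; granting these two points, the remaining bookkeeping is routine.
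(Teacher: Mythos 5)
Your proposal is correct and takes essentially the same route as the paper: both reduce to vertices in the image of $\varphi'$ by equivariance, split the star/link of the vertex $(\sigma)$ in $K'_+$ into a ``down'' part (chains ending below $\sigma$) joined with an ``up'' part (chains beginning above $\sigma$), identify the union of reflected down-parts with the boundary sphere of the doubled cube $\widetilde{C}_{\sigma}$ (your $S^k$ tiled by $2^{k+1}$ orthant simplices), and identify the up-part with $|\mathrm{Lk}(\sigma',K')|$ via a full flag ending at $\sigma$. The only difference is organizational: you compute the link directly as the local basic construction $\mathcal{U}(W_v,\mathrm{Lk}(v,|K'_+|))$ and pull $\mathcal{U}$ through the join, whereas the paper computes the star as in \eqref{eq:nei1}--\eqref{eq:nei3} and reads off the link, but the geometric content is the same.
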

\begin{proof}[Proof of Theorem \ref{thm:Davis}]
  First if $n\leq 2$, then $|K|$   can always
  be realized as the boundary of a convex polytope in $\mathbb{R}^n$, thus
  the statement follows from Remark \ref{rem:smooth}. 

  Suppose that $n\geq 3$. It is easy to see that the 
  link of any $k$-simplex ($k\geq 0$) in a homology $(n-1)$--manifold 
  (resp.~a $\mathrm{PL}$ $(n-1)$--manifold) is a 
  generalized homology $(n-k-2)$--sphere 
  (resp.~a $\mathrm{PL}$ $(n-k-2)$--sphere).\footnote{For instance, by induction
	on dimension $k$, with the observation that
	the link of a simplex $\sigma$ in the manifold is 
	that of a vertex in the link of a codimension-$1$ face
	of $\sigma$.}
	In particular, $|\mathrm{Lk}(\sigma',K')|$ is connected if $k=0$,
	thus the space \eqref{eq:join} is always simply connected.
  Together with the fact that the exterior join of a generalized homology 
  sphere (resp.~a $\mathrm{PL}$ sphere) with $S^0$ will be a sphere
  of the same type,\footnote{the first case is easy; 
  see \cite[Proposition 2.23, pp.~23--24]{RS72} for
  the $\mathrm{PL}$ case}
  the statement follows
  from Proposition \ref{lem:neigh}, and the Edwards--Freedman Theorem. 
  \end{proof}
\begin{proof}[Proof of Proposition \ref{lem:neigh}]
  Since $\mathbb{Z}_2^m$ acts on $\mathcal{U}(\mathbb{Z}_2^m,|K'_+|)$ 
  simplicially, it suffices to consider the links of each vertex
  in the image of $\varphi'$. The link of $\varphi'(|\emptyset|)$
  is $|K'|$. In what follows we consider other links.

  Consider a vertex $(\sigma)\in K'_+$, where $\sigma\in K$ is 
  a $k$-simplex, $k\geq0$.
  Let $K_{<\sigma}'$ (resp.~$K'_{\geq \sigma}$) 
  be the subcomplex of $K'_+$ consisting 
  of chains that ends with a \emph{proper face} of $\sigma$ (resp.~\emph{begins} with
  $\sigma$). By definition, 
  $\varphi'(|K_{<\sigma}'|)*\varphi'(|K'_{\geq \sigma}|)$ 
  exists, which is $\varphi'(|K_{<\sigma}'*K'_{\geq \sigma}|)$, 
  where we see that $K_{<\sigma}*K'_{\geq \sigma}$ is the subcomplex 
  of chains containing $\sigma$, i.e., it is the star of $(\sigma)$ in $K'_+$. 
  It follows that $|K'_{\geq \sigma}|=\bigcap_{i\in\sigma}Y_i$, where
  $\varphi'(Y_i)$ is fixed by $s_i$ (see \eqref{def:U}), thus 
  the star of $|(\sigma)|$ in 
  $\mathcal{U}(\mathbb{Z}_2^m,|K'_+|)$ is given by
  \begin{equation}
	\bigcup_{g\in\langle s_i\rangle_{i\in\sigma}}
	g|K_{<\sigma}'*K'_{\geq \sigma}|.
	\label{eq:nei1}
  \end{equation}
  Since $\varphi(|K'_{\geq\sigma}|)$ is invariant under the 
  subgroup $\langle s_i\rangle_{i\in\sigma}$, the image
  of \eqref{eq:nei1} under $\varphi'$ is 
  \begin{equation}
	\bigcup_{g\in\langle s_i\rangle_{i\in\sigma}}
	g\varphi'(|K_{<\sigma}'|)*\varphi'(|K'_{\geq \sigma}|)=                           
    \left(\bigcup_{g\in\langle s_i\rangle_{i\in\sigma}}
	g\varphi'(|K_{<\sigma}'|)\right)*\varphi'(|K'_{\geq \sigma}|).
	\label{eq:nei2}
  \end{equation}
  Let $\sigma'\in K'$ be any $k$-simplex (as a chain of length $k+1$) 
  ending with $\sigma$. Then   
  \begin{equation}
	\varphi'(|K'_{\geq \sigma}|)=
  \varphi'(|(\sigma)|)*\varphi'(|\mathrm{Lk}(\sigma',K')|)=
  \varphi'(|(\sigma)|)*|\mathrm{Lk}(\sigma',K')|.
	\label{eq:nei3}
  \end{equation}
  At last, let $K_{\leq \sigma}'$ be the subcomplex of $K'_+$ consisting 
  of chains ending with a face of $\sigma$. It is straightforward to see
  that
  $\widetilde{C}_{\sigma}=\bigcup_{g\in\langle s_i\rangle_{i\in\sigma}}g\varphi'(|K_{\leq \sigma}'|)$
  is a $(k+1)$-cube centered at $\varphi'( |(\sigma)| )$, i.e., 
  $C_{\sigma}$ in \eqref{def:cube} with $[0,1]$ replaced by $[-1,1]$.
  It follows that $\bigcup_{g\in\langle s_i\rangle_{i\in\sigma}}
	g\varphi'(|K_{<\sigma}'|)$ is the boundary of $\widetilde{C}_{\sigma}$,
	hence it is a $k$--sphere $\mathrm{PL}$ homeomorphic to the joins
	of $k+1$ copies of $S^0$. Together with \eqref{eq:nei3} and \eqref{eq:nei2}, 
	Proposition \ref{lem:neigh} follows. 
  \end{proof}
  \subsection{An application of Theorem \ref{thm:Davis}}\label{subs:app1}
  Suppose the vertex set of $K$ is $[m]$. A subset $\tau\subset [m]$ not
  contained in $K$ is called a \emph{missing face}, if any proper
  subset of $\tau$ is a simplex of $K$.   
  Clearly $K$ is determined by its missing faces.
  
  Let $J=(j_k)_{k=1}^m$ be an $m$-tuple of positive integers, with 
  $d(J)=\sum_{k=1}^mj_k$. For $i=1,2,\ldots,m$, let $B_i$ be the 
  block of $j_i$ integers (we set $j_0=0$)
  \[\sum_{k=1}^{i-1}j_k+1,	\ \sum_{k=1}^{i-1}j_k+2, \ \ldots, \ \sum_{k=1}^{i}j_k.\]

  \begin{defin}[cf.~\cite{BBCG10b}]\label{def:BBCG}
	Suppose that $\tau=\{i_1,i_2,\ldots,i_l\}$ is a missing face
	of $K$. Denote by $\tau (J)$ 
	the set $\{B_{i_1},B_{i_2},\ldots, B_{i_l}\}$. With respect to 
	$K$ and $J$,
	we define an abstract simplicial complex $K(J)$	
	in such a way that, as $\tau$ runs through all missing faces
	of $K$, $\tau(J)$ gives all missing faces of $K(J)$. The vertex
	set of $K(J)$ is $[d(J)]$.\footnote{The definition here is slightly different from
	  \cite[Definition 2.1]{BBCG10b}: we identify the 
	vertices of $K(J)$ with $[d(J)]$ a priori and explicitly.}
  \end{defin}
  In what follows, let $(\underline{D},\underline{S})$ be 
  the pairs $(D^{j_i},S^{j_i-1})$, i.e., the unit $j_i$-disk with its 
  boundary, $i=1,2,\ldots, m$.
  \begin{lem}[cf.~\cite{BBCG10b}]\label{lem:BBCG}
	The polyhedral product $(\underline{D},\underline{S})^K$ is 
	homeomorphic to the real moment-angle complex $(D^1,S^0)^{K(J)}$
	(see \eqref{def:pp} for definition).
  \end{lem}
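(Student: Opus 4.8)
The plan is to exhibit an explicit homeomorphism by comparing the cube decompositions on both sides, exploiting the fact that a disk $D^{j}$ is itself a product of intervals up to homeomorphism is \emph{not} quite what we want; rather, the cleanest route is to recognize $(\underline{D},\underline{S})^K$ as a quotient of $(\underline{D},\underline{S})^{K}$ by reflections in a way parallel to \eqref{def:U}. First I would fix, for each block $B_i$ of size $j_i$, a homeomorphism between the pair $(D^{j_i},S^{j_i-1})$ and the quotient of the $j_i$-fold product $(D^1,S^0)^{\Delta^{j_i-1}}$ by the reflection group $\langle s_k\rangle_{k\in B_i}$, where $\Delta^{j_i-1}$ is the full simplex on $B_i$. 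Concretely, $(D^1,S^0)^{\Delta^{j_i-1}}$ is the cube $[-1,1]^{j_i}$, the reflection group is $\mathbb{Z}_2^{j_i}$ acting by sign changes, and the orbit space is a $j_i$-disk whose boundary corresponds to the image of the boundary of the cube; the subset where at least one coordinate attains $\pm 1$ maps onto $S^{j_i-1}$. This realizes $(D^{j_i},S^{j_i-1})$ as an orbit space $[-1,1]^{j_i}/\mathbb{Z}_2^{j_i}$ with the correct distinguished subset.

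Next I would assemble these blockwise identifications into a global statement. The key observation is that the defining condition \eqref{def:pp} for membership in a polyhedral product depends only on the index set $\sigma_x=\{i\mid x_i\in X_i\setminus A_i\}$ lying in the complex, and Definition \ref{def:BBCG} is engineered precisely so that the missing faces of $K(J)$ are the block-expansions $\tau(J)$ of the missing faces $\tau$ of $K$. Thus a point $y\in\prod_{k=1}^{d(J)}[-1,1]$ lies in $(D^1,S^0)^{K(J)}$ if and only if its support (the set of coordinates not equal to $\pm1$) avoids every missing face of $K(J)$, equivalently if and only if, after collapsing each block $B_i$ to a single coordinate via the orbit map $[-1,1]^{j_i}\to D^{j_i}$, the resulting point of $\prod_i D^{j_i}$ has its ``interior-support'' index set in $K$. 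I would make this precise by defining the map
\begin{equation}
  q\co \prod_{k=1}^{d(J)}[-1,1]\longrightarrow \prod_{i=1}^m D^{j_i},
  \qquad q=\prod_{i=1}^m q_i,
  \label{eq:blockmap}
\end{equation}
where $q_i\co [-1,1]^{j_i}\to D^{j_i}$ is the orbit map above, and checking that $q$ restricts to a homeomorphism $(D^1,S^0)^{K(J)}\to(\underline{D},\underline{S})^K$ on the nose: the restriction is surjective, continuous, closed (source is compact, target Hausdorff), and injective because within a single block the only identifications $q_i$ makes lie in $S^{j_i-1}$, which is exactly where the ``$A_i$'' coordinate lives, so no two distinct points of the source with the same image can both satisfy the respective membership conditions in conflicting ways.

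The step I expect to be the main obstacle is verifying the support-compatibility cleanly, i.e.\ that for $y$ in the cube one has $q(y)\in(\underline{D},\underline{S})^K$ if and only if $y\in(D^1,S^0)^{K(J)}$. The subtlety is that a block $B_i$ contributes index $i$ to the $K$-support of $q(y)$ precisely when $q_i(y_{B_i})\in\mathrm{int}\,D^{j_i}$, which happens iff all coordinates of $y_{B_i}$ lie in $(-1,1)$, i.e.\ iff $B_i$ is entirely contained in the $K(J)$-support of $y$; conversely the block-expanded missing faces $\tau(J)$ of $K(J)$ are unions of full blocks, so the support of $y$ meets a missing face of $K(J)$ in a full block exactly when the $K$-support of $q(y)$ contains the corresponding missing face $\tau$ of $K$. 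Matching these two conditions is the heart of the argument and is where Definition \ref{def:BBCG} does its work; once it is in place, the fact that $q$ is a homeomorphism onto its image, combined with the blockwise homeomorphisms $q_i$, yields the claim. I would remark that this also recovers the identification used in \cite{BBCG10b} and, via Lemma \ref{lem:tri} and the basic-construction description \eqref{def:U}, is compatible with the triangulations, which is what makes the corollary on moment-angle manifolds in Subsection \ref{subs:app1} go through.
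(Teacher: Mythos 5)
Your combinatorial analysis---that a block $B_i$ contributes $i$ to the $K$-support of the image point exactly when $B_i$ is entirely contained in the $K(J)$-support, and that the missing faces $\tau(J)$ of $K(J)$ are unions of full blocks---is correct, and it is exactly the heart of the paper's proof. But the blockwise topological identification you build it on is wrong, in two ways. First, the orbit map $q_i\co[-1,1]^{j_i}\to[-1,1]^{j_i}/\mathbb{Z}_2^{j_i}$ is $2^{j_i}$-to-one on the set where all coordinates are nonzero: the sign-change action identifies, say, $(1/2,1/2)$ with $(-1/2,1/2)$, which are \emph{interior} points of the cube. So your claim that the only identifications $q_i$ makes lie in $S^{j_i-1}$ is false, and the global $q$ is nowhere near injective; already for $m=1$, $K$ a single vertex and $j_1=2$, your $q$ is the four-to-one folding of $[-1,1]^2$ onto $[0,1]^2$, while the lemma asserts a homeomorphism. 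Second, even as an identification of pairs the quotient is wrong: the orbit space is $[0,1]^{j_i}$, and the image of $\partial[-1,1]^{j_i}$ in it is $\{y\mid\max_k y_k=1\}$, the union of the ``far'' faces, which is a $(j_i-1)$-disk, not a $(j_i-1)$-sphere (the hyperplanes $x_k=0$, the fixed-point sets of the reflections, become boundary in the quotient although they are interior in your source). Consequently the support-compatibility you identify as the main obstacle genuinely fails: for a block of size $2$ the point $y_{B_i}=(0,1/2)$ has all block coordinates in $(-1,1)$, yet $q_i(y_{B_i})$ lies on the boundary of the quotient disk. Note also that the direction of the reflection construction is backwards relative to \eqref{def:U}: $(D^1,S^0)^{K(J)}$ is the basic construction obtained from an orbit space by \emph{reflecting}, so quotienting it by $\mathbb{Z}_2^{d(J)}$ returns a cubical complex in the first orthant, not $(\underline{D},\underline{S})^K$.

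The fix is precisely the possibility you raised and discarded in your first sentence, and it is the paper's proof: no quotient is needed, because the cube is already a disk. Take canonical homeomorphisms of \emph{pairs} $f_i\co (I^{j_i},\partial I^{j_i})\to (D^{j_i},S^{j_i-1})$ (e.g., by radial rescaling), and set $f=(f_1,\ldots,f_m)\co I^{d(J)}\to\prod_{i=1}^m D^{j_i}$, a homeomorphism. Then $x_i\in D^{j_i}\setminus S^{j_i-1}$ if and only if $f_i^{-1}(x_i)\in I^{j_i}\setminus\partial I^{j_i}$, i.e., if and only if all block-$B_i$ coordinates lie in $(-1,1)$; your missing-face matching paragraph then goes through verbatim, via \eqref{def:pp} and Definition \ref{def:BBCG}, to show that $f$ restricts to a bijection, hence a homeomorphism, $(D^1,S^0)^{K(J)}\to(\underline{D},\underline{S})^K$, with no closed-map or injectivity argument needed beyond the fact that $f$ itself is a homeomorphism.
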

  \begin{proof}
	Let $f=(f_1,f_2,\ldots,f_m)$ be the product of canonical homeomorphisms 
	$f_i\co I^{j_i}\to D^{j_i}$, $I=[-1,1]$. We write	
	$f\co I^{d(J)}\to \prod_{i=1}^mD^{j_i}$, which is clearly a homeomorphism.
	We claim that the restriction of $f$ to $(D^1,S^0)^{K(J)}$ 
	induces a homeomorphism onto $(\underline{D},\underline{S})^K$.
	By \eqref{def:pp},
	$x=(x_i)_{i=1}^m\in (\underline{D},\underline{S})^K$ if and only if 
	$\sigma_x=\{i\in [m]\mid x_i\in D^{j_i}\setminus S^{j_i-1}\}$ does
	not contain any missing faces of $K$. This happens if and only if
	$\sigma_{x}(J)=\{B_i \mid f^{-1}_i(x_i)\in I^{j_i}
	\setminus \partial I^{j_i}\}$ does not contain any missing faces of 
	$K(J)$. Therefore, the claim holds, from which the statement follows.
  \end{proof}
  Following \cite{BBCG10b}, $K(J)$ can be understood in another way. 
  
  Recall that the \emph{simplicial join} of two disjoint 
  complexes $K_1$ and $K_2$ is the complex  
  $K_1*K_2=\{\sigma_1\cup\sigma_2\mid\sigma_i\in K_i,i=1,2\}$.
  Suppose that $v_i=\{i\}$ is the $i$-th vertex of 
  $K$, $i=1,2,\ldots,m$. Let $K(v_i)$ be the 
  \emph{simplicial wedge} of $K$ on $v_i$, which 
  is a simplicial complex given by
  \begin{equation}
	K(v_i)=\{i\}*K_{[m]\setminus\{i\}}\bigcup
	\{i+1\}*K_{[m]\setminus\{i\}}\bigcup\{i,i+1\}*\mathrm{Lk}(v_i,K),	
	\label{def:sw}
  \end{equation}
  where $K_{[m]\setminus\{i\}}=\{\sigma\in K\mid\sigma\subset [m]\setminus\{i\}\}$ 
  is the \emph{full subcomplex} 
  containing the link $\mathrm{Lk}(v_i,K)$,
  such that for all $j> i$, the label of the original $j$-th vertex is 
  shifted to $j+1$ (labels $\leq i$ are preserved). Thus the vertex 
  set of $K(v_i)$ is $[m+1]$.
  
  For $i=1,2,\ldots,m$, let $J_i=(j_k)_{k=1}^m$ be the 
  tuple with $j_i=2$, and $j_k=1$ for all $k\not=i$.
  One can check that $K(J_i)=K(v_i)$. In this way, $K(J)$
  can be obtained via consecutive simplicial wedge 
  constructions. Moreover, it can be shown that $|K(v_i)|$ is $\mathrm{PL}$
  homeomorphic to the suspension $S^0*|K|$ 
  (see for example, \cite[Proposition 2.2]{CP15}). As a conclusion, we have
  the lemma below, the proof of which is omitted 
  (see the proof of Theorem \ref{thm:Davis}).
  \begin{lem}
	$|K(v_i)|$ is a generalized homology sphere (resp.~a $\mathrm{PL}$ sphere) 
	if and only if $|K|$ is (see Definition \ref{def:sphere}). 
  \end{lem}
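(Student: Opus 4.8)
The plan is to reduce the statement about $|K(v_i)|$ to a statement about a suspension, and then invoke exactly the machinery already developed for Theorem \ref{thm:Davis}.

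First I would use the fact, cited in the excerpt just before the lemma, that $|K(v_i)|$ is $\mathrm{PL}$ homeomorphic to the suspension $S^0*|K|$. This is the geometric heart of the matter, and I would take it as given from \cite[Proposition 2.2]{CP15}. Once this $\mathrm{PL}$ homeomorphism is in hand, the problem becomes: show that $S^0*|K|$ is a generalized homology sphere (resp.~a $\mathrm{PL}$ sphere) if and only if $|K|$ is. Because $\mathrm{PL}$ homeomorphism preserves both properties in Definition \ref{def:sphere}, it suffices to analyze the suspension.

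Next I would treat the two cases separately, both of which already appear inside the proof of Theorem \ref{thm:Davis}. For the $\mathrm{PL}$ case, I would cite the same result used there: the exterior join of a $\mathrm{PL}$ sphere with $S^0$ is again a $\mathrm{PL}$ sphere (see \cite[Proposition 2.23, pp.~23--24]{RS72}), and conversely, taking a link of one of the two suspension points recovers $|K|$, so $S^0*|K|$ being a $\mathrm{PL}$ $n$--sphere forces $|K|$ to be a $\mathrm{PL}$ $(n-1)$--sphere. For the homology case, the forward direction follows from the Mayer--Vietoris or suspension isomorphism $\widetilde{H}_*(S^0*|K|)\cong\widetilde{H}_{*-1}(|K|)$, together with the observation that the link of every simplex of $S^0*|K|$ is either a suspension of a link in $|K|$, a join of $S^0$ with such a link, or a link in $|K|$ itself; checking that each such link has the homology of a sphere of the correct dimension, given that $|K|$ is a homology manifold, establishes that $S^0*|K|$ is a homology manifold with the homology of a sphere. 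The converse again follows by passing to the link of a suspension point.

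The step I expect to be the main obstacle is the careful bookkeeping of the links in the suspension: one must verify that $S^0*|K|$ is a homology $n$--manifold, not merely that it has the homology of an $n$--sphere, and this requires checking the link condition at simplices lying on $|K|$, at the two cone points, and at simplices joining a cone point to $|K|$. However, this is precisely the type of link computation already carried out in the proof of Proposition \ref{lem:neigh} and the proof of Theorem \ref{thm:Davis}, where joins with copies of $S^0$ are shown to preserve the relevant sphere condition; for this reason the authors remark that the proof is omitted. I would therefore organize the argument to parallel that proof as closely as possible, reducing every link to a join of the form $S^0*|\mathrm{Lk}(\sigma,K)|$ and invoking the inductive structure already established.
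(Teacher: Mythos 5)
Your proposal is correct and follows essentially the same route as the paper: the paper omits the proof precisely because, after the cited $\mathrm{PL}$ homeomorphism $|K(v_i)|\cong S^0*|K|$, the statement reduces to the two facts already used in the proof of Theorem \ref{thm:Davis}, namely that the join with $S^0$ preserves generalized homology spheres (resp.~$\mathrm{PL}$ spheres, via \cite[Proposition 2.23]{RS72}) and that links of simplices in such spheres are again spheres of the same type, which yields the converse by passing to the link of a suspension point. Your bookkeeping of the three kinds of links in the suspension is exactly the verification the paper leaves implicit, so there is no gap.
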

  \begin{prop}
	The polyhedral product $(D^1,S^0)^{K(J)}$ is 
	a homology manifold (resp.~a $\mathrm{PL}$ manifold) 
	of dimension $n+d(J)-m$, 
	if and only if $|K|$ is a generalized homology sphere
	(resp.~a $\mathrm{PL}$ sphere) 
	of dimension $n-1$. Moreover, $(D^1,S^0)^{K(J)}$ is 
	a topological manifold if additionally $d(J)>m$.
  \end{prop}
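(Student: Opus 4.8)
The plan is to derive everything from Davis's characterization (Theorem~\ref{thm:Davis}), applied not to $K$ but to the complex $K(J)$, using two facts recorded just above the proposition: that $K(J)$ is obtained from $K$ by $d(J)-m$ consecutive simplicial wedge operations, and that each wedge $L\mapsto L(v_i)$ produces a space $|L(v_i)|$ that is $\mathrm{PL}$ homeomorphic to the suspension $S^0*|L|$. I would first combine these by induction on the number of wedges to conclude that $|K(J)|$ is $\mathrm{PL}$ homeomorphic to the $(d(J)-m)$-fold suspension of $|K|$; this single homeomorphism carries all the geometric content, and every subsequent step is bookkeeping on top of it.

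With this in hand I would prove the main equivalence. Since a suspension raises dimension by exactly one and, by the lemma immediately preceding the proposition, sends a generalized homology sphere (resp.~a $\mathrm{PL}$ sphere) to one of the next higher dimension and conversely, iterating shows that $|K(J)|$ is a generalized homology (resp.~$\mathrm{PL}$) sphere of dimension $(n-1)+(d(J)-m)$ if and only if $|K|$ is such a sphere of dimension $n-1$. Applying Theorem~\ref{thm:Davis} to $K(J)$ and setting $N-1=(n-1)+(d(J)-m)$, so that $N=n+d(J)-m$, gives both directions of the first assertion together with the stated dimension; the only point to note in the backward direction is that, because each suspension changes dimension by exactly one, the dimension of $|K|$ is forced to equal $n-1$.

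For the last assertion — where $|K|$ is a generalized homology sphere, so that the first part already makes $(D^1,S^0)^{K(J)}$ a homology manifold — I would invoke the topological-manifold clause of Theorem~\ref{thm:Davis}: for $N=n+d(J)-m\leq 2$ a homology manifold is automatically a topological manifold, while for $N\geq 3$ it suffices to verify that $|K(J)|$ is simply connected. Here $|K(J)|$ is the $(d(J)-m)$-fold suspension of $|K|$ with $d(J)-m\geq 1$, and I would treat two cases. If $d(J)-m\geq 2$, then $|K(J)|=S^0*Y$ where $Y$ is the $(d(J)-m-1)$-fold suspension of the nonempty complex $|K|$, hence itself path-connected, so $|K(J)|$ is simply connected by van Kampen. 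If $d(J)-m=1$, then $N=n+1\geq 3$ forces $n\geq 2$, so $|K|$ is a generalized homology sphere of dimension $\geq 1$ and is therefore connected, whence its suspension $|K(J)|$ is again simply connected. The only genuine subtlety — and the step I expect to need the most care — is precisely this borderline case $d(J)-m=1$: one must recognize that this is exactly where connectivity of $|K|$ is required and that the inequality $N\geq 3$ is what guarantees it.
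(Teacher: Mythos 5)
Your proof is correct and takes essentially the same route as the paper: iterate the wedge-equals-suspension fact, apply Theorem~\ref{thm:Davis} to $K(J)$ for the first statement, and use van Kampen on the suspension for the second, isolating exactly the borderline case $d(J)-m=1$ where connectivity of $|K|$ is needed. The only cosmetic difference is in the low-dimensional exceptional case: the paper reduces it to $|K|=S^0$, $m=2$, and identifies $(D^1,S^0)^{K(J)}$ directly as the $2$--sphere, whereas you invoke the automatic (dimension $\leq 2$) clause of Theorem~\ref{thm:Davis}; both dispositions are valid.
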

  \begin{proof}
	Since every simplicial wedge construction increases the dimension 
	by one,
	the first statement follows directly from Theorem \ref{thm:Davis},
	and the lemma above. For the second one, from the Van Kampen theorem, 
    we only need to consider the case when $|K|$ is the $0$--sphere ($m=2$), 
	with $d(J)-m=1$.	
	In this case $|K(J)|$ bounds the $2$-simplex, 
	thus $(D^1,S^0)^{K(J)}$ is the $2$--sphere.
  \end{proof}
  In particular, suppose that all pairs are taken as $(D^2,S^1)$. 
  Together with Lemma \ref{lem:BBCG}, we have 
  (cf.~{\cite[Problem 6.14]{BP02}):
  \begin{cor}
	The moment-angle complex $(D^2,S^1)^K$ is a topological $(n+m)$--manifold
	if and only if $|K|$ is a generalized homology $(n-1)$--sphere.
  \end{cor}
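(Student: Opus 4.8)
The plan is to reduce everything to the proposition above by specializing the tuple $J$ to the constant value $2$. First I would set $J=(2,2,\ldots,2)$, so that each pair $(D^{j_i},S^{j_i-1})$ equals $(D^2,S^1)$ and hence $(\underline{D},\underline{S})^K=(D^2,S^1)^K$. By Lemma \ref{lem:BBCG} this polyhedral product is homeomorphic to the real moment-angle complex $(D^1,S^0)^{K(J)}$. Computing $d(J)=\sum_{i=1}^m 2=2m$, the dimension appearing in the proposition is $n+d(J)-m=n+m$, matching the $(n+m)$ in the statement.

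For the ``if'' direction, suppose $|K|$ is a generalized homology $(n-1)$--sphere. Then the proposition above shows that $(D^1,S^0)^{K(J)}$ is a homology $(n+m)$--manifold. Since $m\geq 1$ we have $d(J)=2m>m$, so the ``Moreover'' clause of that proposition upgrades this to a topological manifold. Transporting along the homeomorphism of Lemma \ref{lem:BBCG}, I conclude that $(D^2,S^1)^K$ is a topological $(n+m)$--manifold.

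For the ``only if'' direction, suppose $(D^2,S^1)^K$ is a topological $(n+m)$--manifold. Via the same homeomorphism, $(D^1,S^0)^{K(J)}$ is then a topological $(n+m)$--manifold as well. Equipped with the triangulation coming from the construction, it is in particular a homology $(n+m)$--manifold: local homology groups are a topological invariant and, by the equivalence recorded in the footnote to Definition \ref{def:sphere}, they detect exactly the link condition. The homology-manifold direction of the proposition above then forces $|K|$ to be a generalized homology $(n-1)$--sphere, completing the equivalence.

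I do not expect a serious obstacle here, since the statement is genuinely a corollary. The only points requiring care are bookkeeping: verifying that the constant tuple satisfies $d(J)>m$ (so that one lands in the topological-manifold regime of the proposition rather than merely its homology- or $\mathrm{PL}$-manifold regime), the dimension count $n+d(J)-m=n+m$, and the standard fact that a triangulated topological manifold is a homology manifold, which is what lets the ``only if'' direction appeal to the homology-manifold statement.
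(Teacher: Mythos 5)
Your proposal is correct and follows essentially the same route as the paper, which obtains the corollary by specializing $J=(\bm{2})$ in the preceding proposition via Lemma \ref{lem:BBCG}, noting $d(J)=2m>m$ and $n+d(J)-m=n+m$. You merely spell out the bookkeeping the paper leaves implicit, including the standard fact that a triangulated topological manifold is a homology manifold in the ``only if'' direction.
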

  \section{Cup and cap products}\label{sec:Whitney}
   In this section we consider the cohomology of a real moment-angle complex
   $(D^1,S^0)^K$, with products involved. 
   
   \subsection{Whitney's formulae}\label{subs:Wf}
   We say that an abstract simplicial complex $K$ is of \emph{finite type}, if
   the number of simplices in each dimension is finite. 
    Let $C_*(K)$ (resp.~$C^*(K)$) denote the simplicial chain complex 
   (resp.~cochain complex) of $K$, where 
   $C_*(K)=\bigoplus_{i=0}^{\infty}C_i(K)$ 
   (resp.~$C^*(K)=\bigoplus_{i=0}^{\infty}C^i(K)$).  
   If $K$ is of finite type, then each $C_i(K)$ is finitely
   generated, whose dual basis generates
   $C^i(K)=\mathrm{Hom}(C_i(K),\mathbb{Z})$.
   
   Suppose that $\bm{p}=(p_i)_{i=1}^m$ and 
   $\bm{q}=(q_i)_{i=1}^m$ are two vectors of 
   integers.   The notation $(\bm{p},\bm{q})$ means 
   the $\mathrm{mod}$ $2$ integer from the shuffle
   \[ p_1,p_2,\ldots,p_m,q_1,q_2,\ldots,q_m \to p_1,q_1,p_2,q_2,\ldots,p_m,q_m,\]
   namely by interchanging adjacent integers to make the left sequence into 
   the right one, each interchange yields a summand, being the product of the 
   two integers involved, and 
   $(\bm{p},\bm{q})$ is the sum of these summands. 
   A straightforward calculation shows that 
   \begin{equation}
	 (\bm{p},\bm{q})=\sum_{i=1}^mq_i\sum_{j>i}p_j  \quad \mathrm{mod} \ 2.
	 \label{def:pq}
   \end{equation}
	 
   Let $X=\prod_{i=1}^m|K_i|$ be a product of polyhedra, 
	 with each $K_i$ of finite type. Denote by 
	 $C_*(X)=\bigoplus_{p=0}^{\infty}C_p(X)$ 
	 the tensor product $\bigotimes_{i=1}^mC_{*}(K_i)$, 
	 which is a differential graded $\mathbb{Z}$-module 
	 with the boundary operator $\partial$ subject to
	 \begin{equation}
	   \partial(\otimes_{i=1}^mc_{p_i})=
	   (-1)^{\sum_{j<i}p_j}c_{p_1}\otimes \ldots\otimes 
	   c_{p_{i-1}}\otimes \partial c_{p_i}\otimes c_{p_{i+1}}\otimes\ldots\otimes c_{p_m},
	   \label{eq:bound1}
	 \end{equation}
	   where $c_{p_i}\in C_{p_i}(K)$.
	   Let $(C^*(X),\mathrm{d})$ be the \emph{graded dual} of $(C_*(X),\partial)$. 
	   By assumption, we
	   have \[C^p(X)=\bigoplus_{\sum_{i=1}^m p_i=p}\bigotimes_{i=1}^mC^{p_i}(K_i).\]	
	   The notation $\left(C_*(X),C^*(X),\smile,\frown\right)$ means that	
	   $C_*(X)$ and $C^*(X)$ are endowed with products
	   \[\smile\co C^*(X)\otimes C^*(X)\to C^*(X) \quad \text{and} \quad 
		 \frown\co C^*(X)\otimes C_*(X)\to C_*(X)\] respectively, 
	   such that 
	   \begin{equation}
		 \left(\otimes_{i=1}^m c^{p_i}\right)\smile 
		 \left(\otimes_{i=1}^mc^{q_i}\right)
		 =(-1)^{(\bm{p},\bm{q})}
		 \otimes_{i=1}^m c^{p_i}\smile c^{q_i}
		 \label{eq:cup}
	   \end{equation}
		 with $c^{p_i}\in C^{p_i}(K_i)$, $c^{q_i}\in C^{q_i}(K_i)$,
		 $\bm{p}=(p_i)_{i=1}^m$ and $\bm{q}=(q_i)_{i=1}^m$, and 
		 \begin{equation}
		   \left(\otimes_{i=1}^m c^{p_i}\right)\frown 
		   \left(\otimes_{i=1}^mc_{r_i}\right)=(-1)^{(\bm{r}-\bm{p},\bm{p})}
		   \otimes_{i=1}^mc^{p_i}\frown c_{r_i}
		   \label{eq:cap}
		 \end{equation}
 with $c_{r_i}\in C_{r_i}(K_i)$ and $\bm{r}=(r_i)_{i=1}^m$, 
 where each product $\smile$ and $\frown$ on the right-hand
 sides of \eqref{eq:cup} and \eqref{eq:cap} means the \emph{simplicial
 cup and cap products}, respectively 
 (see \eqref{def:cup0} and \eqref{def:cap0}).
	 
	 We treat $X$ as a $CW$ complex,
	 with each cell of the form $\prod_{i=1}^m|\sigma_i|$, $\sigma_i\in K_i$.
	 Let $A$ be a subcomplex of $X$. Denote by $C_*(A)$ the restriction of 
	 $C_*(X)$ to $A$; clearly $C_*(A)$ is closed under $\partial$. 
	 Let $\upsilon_c\co C_*(A)\to C_*(X)$ be the chain inclusion, and
	 let 
	 $(C^*(A),\mathrm{d})$ be the graded dual of $(C_*(A),\partial)$. 
	 Since the dual $\upsilon_c^*$ is surjective, 
	 the quadruple $\left(C_*(A),C^*(A),\smile,\frown\right)$ can be defined
	 uniquely, such that diagrams
    \[
	  \begin{CD}
	   C^*(X)\otimes C^*(X)@>\smile>> C^*(X)\\
       @V\upsilon_c^*\otimes\upsilon_c^*VV   @V\upsilon_c^*VV\\
       C^*(A)\otimes C^*(A)@>\smile>> C^*(A)
	  \end{CD}
	  \quad \text{and}\quad
	  \begin{CD}
	   C^*(X)&\otimes& C_*(X)@>\frown>> C_*(X)\\
       @V\upsilon_c^*VV  @A\upsilon_cAA   @A\upsilon_c AA\\
       C^*(A)&\otimes& C_*(A)@>\frown >> C_*(A)
	  \end{CD}
\]
commute.
 \begin{thm}\label{thm:Whitney}
   Let $\left(H_*(A),H^*(A),\smile,\frown\right)$ be the quadruple 
   of singular (co)homology of $A$, endowed with cup and cap products.	
   Then we have an isomorphism
   \begin{equation}
	 \left(H_*(C_*(A),\partial),H^*(C^*(A),\mathrm{d}),\smile,\frown\right)
	 \cong\left(H_*(A),H^*(A),\smile,\frown\right)
	 \label{iso:prod}
   \end{equation}
   on passage to (co)homology, preserving the products. 
 \end{thm}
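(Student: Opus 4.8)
The plan is to realize both products through a single chain-level \emph{diagonal approximation}, to check that this approximation is compatible with the Eilenberg--Zilber equivalence so that it computes the singular products, and then to use naturality under the chain inclusion $\upsilon_c\co C_*(A)\to C_*(X)$ to transport the statement from the full product $X$ down to the subcomplex $A$. First I would settle the case $A=X$. For a single complex $K_i$ of finite type, the Alexander--Whitney front-face/back-face diagonal $\Delta_i\co C_*(K_i)\to C_*(K_i)\otimes C_*(K_i)$ is a natural augmentation-preserving chain map inducing, on (co)homology, exactly the singular cup and cap products of $|K_i|$; this is the classical simplicial case (cf.~\cite{Whi38}, \cite{ES52}). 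Since the cup product is $\langle u\smile v,c\rangle=\langle u\otimes v,\Delta_i(c)\rangle$ and the cap product is the adjoint contraction of $\Delta_i$, both are determined by $\Delta_i$ once signs are fixed.

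Next I would assemble the diagonal for $X=\prod_{i=1}^m|K_i|$. Writing $C_*(X)=\bigotimes_{i=1}^mC_*(K_i)$ with the Koszul boundary \eqref{eq:bound1}, the relevant map is $\phi_X=\mathrm{sh}\circ(\Delta_1\otimes\cdots\otimes\Delta_m)$, where $\mathrm{sh}$ reorders the $2m$ tensor factors of $\bigotimes_i\bigl(C_*(K_i)\otimes C_*(K_i)\bigr)$ into $\bigl(\bigotimes_iC_*(K_i)\bigr)\otimes\bigl(\bigotimes_iC_*(K_i)\bigr)$. The entire arithmetic content is that the Koszul sign produced by this reordering of factors of prescribed degrees is precisely the shuffle parity $(\bm{p},\bm{q})$ of \eqref{def:pq}; substituting it into the adjoint formulas reproduces \eqref{eq:cup} for the cup product and, after contracting against a chain of multidegree $\bm{r}$, the sign $(\bm{r}-\bm{p},\bm{p})$ of \eqref{eq:cap} for the cap product. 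To see that $\phi_X$ actually computes the \emph{singular} products of $X$, I would invoke acyclic models: both $\phi_X$ and the singular diagonal $S_*(\Delta)$, transported through the Eilenberg--Zilber shuffle equivalence $\bigotimes_iS_*(|K_i|)\simeq S_*(X)$, are natural augmentation-preserving chain maps on the category whose models are products of simplices, which are acyclic, so the two are naturally chain homotopic and hence agree on (co)homology.

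Finally I would pass to a subcomplex $A$. The key observation is that $\phi_X$ is \emph{cellular and local}: on a product cell $\prod_i|\sigma_i|$ every tensor factor occurring in $\phi_X(\prod_i|\sigma_i|)$ is a product of front and back faces of the $\sigma_i$, hence a sub-cell of $\prod_i|\sigma_i|$; since $A$ is closed under faces, $\phi_X$ restricts to a chain map $\phi_A\co C_*(A)\to C_*(A)\otimes C_*(A)$ with $(\upsilon_c\otimes\upsilon_c)\circ\phi_A=\phi_X\circ\upsilon_c$. Dualizing and using the surjectivity of $\upsilon_c^*$ shows that the products on $\bigl(C^*(A),C_*(A)\bigr)$ defined through the commuting squares of the excerpt coincide with those induced by $\phi_A$, and the same acyclic-models comparison over the subcomplex identifies $\phi_A$ with the singular diagonal of $A$. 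Combined with the fact that $C_*(A)$ is the cellular chain complex of the $CW$ complex $A$, so that $H_*(C_*(A))\cong H_*(A)$ and $H^*(C^*(A))\cong H^*(A)$ additively and naturally, this yields the product-preserving isomorphism \eqref{iso:prod}. I expect the main obstacle to be the sign bookkeeping of the middle step: verifying that reordering the factors of $X\times X$ yields exactly $(\bm{p},\bm{q})$ and, for the cap product, checking that the contraction sign $(\bm{r}-\bm{p},\bm{p})$ is compatible with the Koszul boundary \eqref{eq:bound1}, so that $\smile$ and $\frown$ descend consistently to (co)homology.
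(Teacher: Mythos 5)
Your overall architecture --- Whitney's shuffle-signed diagonal, acyclic models with products of simplices as models to compare it with the singular diagonal, and locality/naturality to descend to the subcomplex --- is the same strategy the paper follows: your map $\mathrm{sh}\circ(\Delta_1\otimes\cdots\otimes\Delta_m)$ is, in the paper's notation, the composite $T_{Shuf}\circ(\otimes_{i=1}^m\tau_{X_i})\circ T^m_X$ of Diagram \eqref{diag:key}, your Koszul reordering sign is exactly the shuffle $T_{Shuf}$ of \eqref{def:T_S} producing $(\bm{p},\bm{q})$, and your first two steps reproduce the paper's comparison of $\psi_X$ and $\psi'_X$. The sign bookkeeping you flag as the main obstacle is genuine but routine, and your verification plan for \eqref{eq:cup} and \eqref{eq:cap} is sound.

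The one step that fails as literally stated is ``the same acyclic-models comparison over the subcomplex identifies $\phi_A$ with the singular diagonal of $A$.'' Acyclic models cannot be run on $A$ itself: $A$ is not a product of simplices and does not belong to the category on which the relevant functors are free and the models acyclic, so the method only yields a natural chain homotopy on $X$ and on its product cells. A homotopy statement on $X$ alone does not suffice either, because a cocycle of $C^*(A)$ satisfies $\upsilon_c^*(\mathrm{d}c^*)=0$ while $\mathrm{d}c^*$ need not vanish in $C^*(X)$; so one cannot conclude that the error terms die in $H^*(A)$ without more. This is exactly where the paper spends its effort: the bilinear error terms $B_i$, $B_i'$ of \eqref{eq:cupf1} and \eqref{eq:capf1} are rewritten, using naturality, as compositions factoring through $\upsilon_c^*$ (see \eqref{eq:pushout} and \eqref{eq:win1}), after which closedness in $C^*(A)$ kills them on passage to cohomology; the cap case additionally needs the boundary identity \eqref{eq:h}. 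The repair is available to you by the very locality argument you already used for $\phi_X$: the natural homotopy, being natural for the inclusions $\prod_i|\sigma_i|\hookrightarrow X$ (which are morphisms of $\mathrm{Top}^m$), carries chains supported on a cell of $A$ to chains carried by that cell, hence restricts to $A$, just as the paper observes that $(T^m)^{-1}$ maps $S^m_*(\prod_i|\sigma_{p_i}|)$ into $S_*(\prod_i|\sigma_{p_i}|)$ when defining $f_A$. With that carrier-type argument made explicit for the comparison homotopy (not only for the diagonal), your proof closes and coincides in substance with the paper's.
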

 \begin{rem}
   The quadruple 
   $\left(H_*(C_*(X),\partial),H^*(C^*(X),\mathrm{d}),\smile,\frown\right)$
   with formulae \eqref{eq:cup} and \eqref{eq:cap} were given in 
   Whitney \cite[pp.~424--426]{Whi38}.  
 \end{rem}
 Theorem \ref{thm:Whitney} will be illustrated in what follows; 
 the proof will given in Section \ref{proof:Whitney}.

\subsection{The (co)homology of a real moment-angle complex}\label{ss:rmac}
   Henceforth, occasionally we will not distinguish a geometric cell 
   (resp.~geometric simplex) or the associated cellular chain 
   (resp.~simplicial chain). 
   
 Let $I=[-1,1]$ be a simplicial complex with a single 
 $1$-simplex $u$ connecting the two endpoints 
 $\underline{t}=\{-1\}$ and $t=\{1\}$, respectively,
 and they are obviously oriented that 
 $\partial u=t-\underline{t}$. 
 Let $I^m$ be the $CW$-complex, with each cell an $m$-fold 
 product of simplices of the form above.
 
 By \eqref{def:pp}, $(D^1,S^0)^K$ is a cellular
 subcomplex embedded in $I^m$, such that a cell
 $e=\prod_{i=1}^me_i\subset I^m$ belongs to $(D^1,S^0)^K$
 if and only if $\sigma_e=\{i\in[m]\mid e_i=u\}$ is a simplex
 of $K$. 
 
 In the remainder of this section, let $(X,A)$ be the pair 
 $(I^m,(D^1,S^0)^K)$.

 Now we perform a change of basis for $C_*(A)$ and $C^*(A)$,
 respectively.
 For $\left(C_*(I),\partial\right)$ 
 indicated above,
 denote by 
 \begin{equation}
   \varepsilon=\partial u=t-\underline{t}, 	
   \label{def:epsilon}
 \end{equation}
 then $C_*(I)$ is generated by $u$, $\varepsilon$ and $\underline{t}$.
 After dualizing, for $\left( C^*(I),\mathrm{d}\right)$,
 we use basis elements $u^*$, $t^*$ and 
 \begin{equation}
   \delta^*=t^*+\underline{t}^*,
   \label{def:delta}
 \end{equation}
	 where $u^*$, $\underline{t}^*$ and $t^*$
	 are the dual simplices of $u$, $\underline{t}$ and $t$, 
	 respectively. Immediately we have $\mathrm{d}t^*=u^*$ and 
	 $\mathrm{d}u^*=0=\mathrm{d}\delta^*$.  

	 \begin{defin}\label{def:bases}
	 Given a basis element $c=\otimes_{i=1}^m c_i\in C_*(X)$, $c_i\in C_*(I)$,
	 we define $\sigma_c=\{i\in [m]\mid c_i=u\}$ and 
	 $\tau_c=\{i\in [m]\mid c_i=\varepsilon\}$.
	 Then we denote  by $c$ the word
	 \[u_{\sigma_c}\varepsilon_{\tau_c}\underline{t}_{[m]\setminus(\sigma_c\cup\tau_c)},\] 
	 which we shall abbreviate as $u_{\sigma_c}\varepsilon_{\tau_c}$, omitting the obvious 
	 part $\underline{t}_{[m]\setminus(\sigma_c\cup\tau_c)}$. Clearly
	 each abbreviated word $u_{\sigma}\varepsilon_{\tau}$ with
	 $\sigma\cap\tau=\emptyset$ 
	 corresponds uniquely to a chain of $C_*(X)$.
     
	 Analogously, we denote a dual basis element $c=\otimes c^{i}\in C^*(X)$, $c^i\in C^*(I)$,
	 by the word
	 \[ u^{\sigma_c}t^{\tau_c}\delta^{[m]\setminus (\sigma_c\cup\tau_c)},\]
	 with $\sigma_c=\{i\in [m]\mid c^i=u^*\}$ and $\tau_c=\{i\in [m]\mid c^i=t^*\}$,
	 and write it as $u^{\sigma_c}t^{\tau_c}$.
	 \end{defin}
	  By definition, we see that $C_*(A)$ and $C^*(A)$ are generated by 
	 $\{u_{\sigma}\varepsilon_{\tau}\mid \sigma\in K, \sigma\cap\tau=\emptyset\}$
	 and
	 $\{u^{\sigma}t^{\tau}\mid \sigma\in K, \sigma\cap\tau=\emptyset\}$ 
	 respectively,
	 with the \emph{void word} corresponding to $\underline{t}_{[m]}$ or 
	 $\delta^{[m]}$.

	 For $i\in [m]$, we use the notation 
	 $(i,\sigma)=\mathrm{card}\{j\in\sigma\mid j<i\}$. 
	 Now take $u_{\sigma}\varepsilon_{\tau}\in C_*(A)$ and $u^{\sigma}t^{\tau}\in C^*(A)$ 
	 respectively,
	 by \eqref{eq:bound1} and its dual form, 
	 we have 
	 \begin{equation}
	   \partial(u_{\sigma}\varepsilon_{\tau})=
	   \sum_{i\in\sigma}(-1)^{(i,\sigma)}u_{\sigma\setminus\{i\}}\varepsilon_{\tau\cup\{i\}},
	   \label{eq:bound2}
	 \end{equation}
	and 
	\begin{equation}
	   \mathrm{d}(u^{\sigma}t^{\tau})=
	   \sum_{\substack{(\sigma\cup\{i\})\in K\\i\in\tau }}
	   (-1)^{(i,\sigma)}u^{\sigma\cup\{i\}}t^{\tau\setminus\{i\}}.
	   \label{eq:cobound1}
	 \end{equation}
	 Recall that for each $\omega\subset [m]$, 
	 the \emph{full subcomplex} $K_{\omega}$ is given by 
	 $\{\sigma\in K\mid\sigma\subset\omega\}$. 
	 Consider the augmented chain complex $(\widetilde{C}_*(K_\omega),\partial)$ 
	 and its dual $(\widetilde{C}^*(K_{\omega}),\mathrm{d})$.
	 Each simplex $\sigma\in K_{\omega}$ can 
   be treated as a subset of $[m]$ equipped with the natural ordering 
   (thus $|\sigma|$ is positively oriented if the indices contained in 
   $\sigma$ are written in the increasing order), 
   then the simplicial boundary operator
   satisfies 
   \begin{equation}
	 \partial(\sigma)=\sum_{i\in\sigma}(-1)^{(i,\sigma)}\sigma\setminus\{i\},
	 \label{eq:bound3}
   \end{equation}
   together with $\partial(\{i\})=\emptyset_{\omega}$ 
	 for all $\{i\}\in K_{\omega}$, due to the augmentation.
   After dualizing, for a dual simplex $\sigma^*\in \widetilde{C}^*(K_{\omega})$, 
   we have $\mathrm{d}(\emptyset_{\omega})=\sum_{i\in\omega}\{i\}$, 
   and otherwise
   \begin{equation}
	 \mathrm{d}(\sigma^*)=\sum_{\substack{(\sigma\cup\{i\})\in K\\i\in\omega\setminus\sigma }}
	 (-1)^{(i,\sigma)}\left(\sigma\cup\{i\}\right)^*.
	 \label{eq:cobound2}
   \end{equation}
   
   A comparison of \eqref{eq:bound2} and \eqref{eq:bound3} 
   (resp.~\eqref{eq:cobound1} and \eqref{eq:cobound2}) yields the 
   following:
   \begin{prop}\label{prop:iso1}
	The mapping
	\[\mu\co\bigoplus_{\omega\subset [m]}\widetilde{C}_*(K_{\omega})\longrightarrow C_*(A)\]
	 given by sending each $\sigma_\omega\in\widetilde{C}_*(K_{\omega}) $ to 
	 the word $u_{\sigma}\varepsilon_{\omega\setminus\sigma}$ yields a chain isomorphism
	 that shifts the degrees up by one.
 Analogously, we have the degree-shifting cochain isomorphism
 \[ \eta\co \bigoplus_{\omega\subset [m]}\widetilde{C}^*(K_{\omega})
 \stackrel{\cong}{\longrightarrow} C^*(A) \]
 sending each $\sigma_{\omega}^*\in \widetilde{C}^*(K_{\omega})$ to 
 $u^{\sigma}t^{\omega\setminus\sigma}$.    
 \end{prop}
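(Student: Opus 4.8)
The plan is to verify that $\mu$ is a bijection on the distinguished bases which intertwines the two boundary operators, so that it is automatically an isomorphism of chain complexes (up to the stated degree shift); the statement for $\eta$ then follows by the dual computation. Since the explicit formulae \eqref{eq:bound2}, \eqref{eq:bound3}, \eqref{eq:cobound1} and \eqref{eq:cobound2} have been arranged so that the two differentials carry identical signs, essentially all the work reduces to bookkeeping with the index sets.

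First I would pin down the bijection of bases. A basis element of $\bigoplus_{\omega\subset[m]}\widetilde{C}_*(K_\omega)$ is a pair $(\omega,\sigma)$ with $\sigma\in K$ and $\sigma\subset\omega\subset[m]$, while a basis element of $C_*(A)$ is a word $u_\sigma\varepsilon_\tau$ with $\sigma\in K$ and $\sigma\cap\tau=\emptyset$ (see Definition \ref{def:bases}). The assignment $(\omega,\sigma)\mapsto u_\sigma\varepsilon_{\omega\setminus\sigma}$ and the assignment $u_\sigma\varepsilon_\tau\mapsto(\sigma\cup\tau,\sigma)$ are mutually inverse, so $\mu$ is an isomorphism of graded modules. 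For the degree shift: the simplex $\sigma\in K_\omega$ has dimension $\mathrm{card}(\sigma)-1$, whereas the word $u_\sigma\varepsilon_{\omega\setminus\sigma}$ has cellular degree $\mathrm{card}(\sigma)$, since each factor $u$ contributes $1$ and each factor $\varepsilon=\partial u$ contributes $0$; hence $\mu$ raises degree by one uniformly.

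Next I would check that $\mu$ commutes with the boundary maps. Applying $\mu$ to \eqref{eq:bound3} gives $\mu(\partial\sigma_\omega)=\sum_{i\in\sigma}(-1)^{(i,\sigma)}u_{\sigma\setminus\{i\}}\varepsilon_{(\omega\setminus\sigma)\cup\{i\}}$, where the index set has been rewritten using $\omega\setminus(\sigma\setminus\{i\})=(\omega\setminus\sigma)\cup\{i\}$ (valid since $i\in\sigma\subset\omega$). On the other hand \eqref{eq:bound2} applied to $\mu(\sigma_\omega)=u_\sigma\varepsilon_{\omega\setminus\sigma}$ produces precisely the same sum. The one point demanding care is the augmentation: when $\sigma=\{i\}$ the term $\partial(\{i\}_\omega)=\emptyset_\omega$ must correspond to the void word, and indeed $(i,\{i\})=0$ forces the matching sign $+1$ together with $\mu(\emptyset_\omega)=\varepsilon_\omega$. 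Thus $\mu\partial=\partial\mu$ on every basis element.

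Finally, the cochain statement for $\eta$ is the dual computation. Here $\eta$ sends $\sigma_\omega^*$ to $u^\sigma t^{\omega\setminus\sigma}$, and the bijection of bases is verified exactly as above, with the void word now corresponding to $\delta^{[m]}$ and with $\emptyset_\omega^*\mapsto t^\omega$. Comparing \eqref{eq:cobound2} with \eqref{eq:cobound1} under the substitution $\tau=\omega\setminus\sigma$ shows $\eta\,\mathrm{d}=\mathrm{d}\,\eta$, since the coface condition $(\sigma\cup\{i\})\in K$ and the constraint $i\in\omega\setminus\sigma$ reproduce the summation range of \eqref{eq:cobound1} verbatim (and, for the full vertex set $[m]$, every singleton $\{i\}$ lies in $K$, which handles the augmented term $\mathrm{d}(\emptyset_\omega^*)=\sum_{i\in\omega}\{i\}^*$). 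I expect the main obstacle to be purely notational rather than conceptual --- namely keeping the augmentation term ($\emptyset$ versus the void word) and the sign $(-1)^{(i,\sigma)}$ consistent across the four formulae --- because the change of basis $\varepsilon=\partial u$ was chosen precisely so that the moment-angle differential coincides termwise with the simplicial differential on the full subcomplexes.
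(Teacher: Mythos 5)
Your proposal is correct and is essentially the paper's own argument: the paper proves Proposition \ref{prop:iso1} precisely by comparing \eqref{eq:bound2} with \eqref{eq:bound3} and \eqref{eq:cobound1} with \eqref{eq:cobound2}, and you carry out that comparison in full, including the base bijection $(\omega,\sigma)\leftrightarrow u_{\sigma}\varepsilon_{\omega\setminus\sigma}$, the degree shift, and the augmentation terms. One wording slip only: $\partial(\{i\}_{\omega})=\emptyset_{\omega}$ corresponds to $\varepsilon_{\omega}$ (as you in fact compute), not to the void word, which arises only for $\omega=\emptyset$.
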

  Combined with Theorem \ref{thm:Whitney}, we have 
 isomorphisms
 \begin{equation}
   H_{p}(C_*(A), \partial)\cong
	   \bigoplus_{\omega\subset [m]}\widetilde{H}_{p-1}(K_{\omega})
	   \quad \text{and} \quad
	   H^{p}(C^*(A),\mathrm{d})\cong
	   \bigoplus_{\omega\subset [m]}\widetilde{H}^{p-1}(K_{\omega})
   \label{eq:hd}
 \end{equation}
in each dimension $p\geq 0$.

	Recall that, by definition, $\widetilde{H}_{-1}(K_{\omega})$ is 
	non-trivial only when $\omega=\emptyset$; 
	$\widetilde{H}_{p-1}(K_{\emptyset})$ vanishes if $p>0$, 
	and is infinite cyclic if $p=0$. 
	Therefore, we see that 
	$\widetilde{H}_{-1}(K_{\emptyset})$
	corresponds to $H_0(A)$; the representative of $\widetilde{H}_{-1}(K_{\emptyset})$ 
	is the ``empty set'' of $K_{\emptyset}$, 
	which is sent by $\mu$ to the void word associated to   
	$\underline{t}_{[m]}$, i.e., the point in $I^m$ with constant 
	coordinates $-1$.
     
\section{Applications of Whitney's formulae}\label{app:Whitney}
In order to apply Whitney's formulae \eqref{eq:cup} and \eqref{eq:cap} 
to the cup and cap products in a real moment-angle complex, by
Theorem \ref{thm:Whitney}, it remains 
to understand the products in the simplicial complex $I=[-1,1]$.

 Recall that the \emph{simplicial cup and cap products} 
in a simplicial complex $|K|$, i.e., 
$\smile\co C^*(K)\otimes C^*(K)\to C^*(K)$ and 
$\frown\co C^*(K)\otimes C_*(K)\to C_*(K)$,
are given as follows. Choose a partial ordering on the vertex set of $K$ which
induces a total ordering on each simplex, and let 
$[v_{i_0},v_{i_1},\ldots,v_{i_p}]\in C_*(K)$ denote
such a $p$-simplex, with $v_0<v_1<\ldots<v_p$ in the given ordering.
Then we have 
\begin{equation}
   (c^p\smile c^q)([v_{i_0},\dots,v_{i_{p+q}}])=
 c^p([v_{i_0},\dots,v_{i_p}])
 c^q ([v_{i_p},\dots,v_{i_{p+q}}])
   \label{def:cup0}
 \end{equation}
and
\begin{equation}
  c^p\frown [i_0,\dots,i_r]=c^p([i_{r-p},\dots,i_r])
[i_0,\dots,i_{r-p}] \quad (c^p([i_{r-p},\dots,i_r])\in\mathbb{Z}),
\label{def:cap0}
\end{equation}
for $c^p\in C^p(K)$ and $c^q\in C^q(K)$.

For instance, let $(C_*(I),\partial)$ 
and $(C^*(I),\mathrm{d})$ be the 
simplicial (co)chain complexes given in Subsection \ref{ss:rmac},
endowed with bases $\{u, \varepsilon, \underline{t}\}$ and 
$\{u^*,\delta^*, t^*\}$, respectively
(see \eqref{def:epsilon}, \eqref{def:delta}). It can be easily
checked that the following formulae hold:
\begin{align*}
   &t^*\smile t^*= t^*,\quad t^*\smile u^*=0,\quad 
   u^*\smile t^*=u^*, \quad u^*\smile u^*=0, \\
   &\delta^*\smile u^*=u^*\smile \delta^*=u^*, \quad 
   \delta^*\smile t^*=t^*\smile\delta^*=t^*, \quad 
   \delta^*\smile\delta^*=\delta^*;\\
	&	\delta^*\frown \underline{t}=\underline{t},
	\quad u^*\frown u=\underline{t},\quad u^*\frown\varepsilon=0,
	\quad u^*\frown \underline{t}=0,\\
   &	t^*\frown\varepsilon=\varepsilon+\underline{t}, \quad 
   t^*\frown u=u,\quad t^*\frown \underline{t}=0,\quad 
	\delta^*\frown\varepsilon=\varepsilon,\quad \delta^*\frown u=u.
  \end{align*}
  
  For $\sigma\subset [m]$, let $\bm{v}(\sigma)=(v_i)_{i=1}^m\in\mathbb{Z}^m$ be the  
  vector with $v_i=1$ if $i\in\sigma$, and $v_i=0$ otherwise.
  By \eqref{eq:cup}, \eqref{eq:cap} and the products presented above, 
  choosing two cochains $u^{\sigma}t^{\tau}$ and $u^{\sigma'}t^{\tau'}$ 
  from $C^*( (D^1,S^0)^K)$ (see Definition \ref{def:bases}), we have (see 
  \eqref{def:pq} for notations)
  \begin{equation}
	u^{\sigma}t^{\tau}\smile u^{\sigma'}t^{\tau'}=
	(-1)^{\left(\bm{v}(\sigma),\bm{v}(\sigma')\right)}u^{\sigma\cup\sigma'}
	  t^{\tau\cup(\tau'\setminus\sigma)}
	\label{eq:cup2}
  \end{equation}
  if $\sigma'\cap(\sigma\cup\tau)=\emptyset$ and $(\sigma\cup\sigma')\in K$,
  otherwise the product vanishes.
  Additionally, choosing $u_{\sigma''}\varepsilon_{\tau''}\in C_*( (D^1,S^0)^K)$, 
  then 
  \begin{equation}
	u^{\sigma}t^{\tau}\frown u_{\sigma''}\varepsilon_{\tau''}=
	(-1)^{\left(\bm{v}(\sigma''\setminus\sigma),\bm{v}(\sigma)\right)}
	  \displaystyle{\sum_{\gamma\subset(\tau\setminus\sigma'')}}
  u_{\sigma''\setminus\sigma}\varepsilon_{\tau''\setminus\gamma}, 	
	\label{eq:cap2}
  \end{equation}
   provided that $\sigma\subset \sigma''$ and $\tau\subset(\sigma''\cup\tau'')$; 
   otherwise $u^{\sigma}t^{\tau}\frown u_{\sigma''}\varepsilon_{\tau''}$ vanishes.
   Suppose that $\mathrm{card}(\sigma)=p$ and $\mathrm{card}(\sigma'')=r$, it
   turns out that (see \eqref{eq:h})
   \begin{equation}
	 \partial(u^{\sigma}t^{\tau}\frown u_{\sigma''}\varepsilon_{\tau''}) 
	 =(-1)^{r-p}\mathrm{d}(u^{\sigma}t^{\tau})\frown u_{\sigma''}\varepsilon_{\tau''}
	 +u^{\sigma}t^{\tau}\frown\partial( u_{\sigma''}\varepsilon_{\tau''}).
	 \label{eq:diffcap}
   \end{equation}
   
   It is convenient to consider 
   $\left(C^*( (D^1,S^0)^K),\smile\right)$ as a differential graded
   algebra with $2m$ generators, such that 
   $\delta^{[m]}$ is the unique identity (see Theorem \ref{thm:iso} for 
   more details).

 \begin{exm}
   Let $K$ be the pentagon with vertex set $[5]$, whose maximal simplices
  are 
  \[\{1,2\}, \ \{2,3\}, \ \{3,4\}, \ \{4,5\}, \ \{5,1\}.\]
  Using $\mathrm{mod}$ $5$ integers,
  it can be checked that full subcomplexes of $K$
with non-vanishing reduced (co)homology are: $K_{i,i+2}$ with $i=1,2,3$, 
$K_{i,i+3}$ with $i=1,2$, $K_{i,i+2,i+3}$ with $i=1,2,\ldots,5$, 
together with $K_{\emptyset}$ and $K_{[5]}$. Therefore by \eqref{eq:hd}, 
$H^{1}( (D^1,S^0)^K)$ 
 is torsion-free with ten generators 
 \begin{equation*}
   \alpha_1=[u^1t^3], \ \alpha_2=[u^1t^4],\ 
   \alpha_3=[u^2t^4],\ \alpha_4=[u^2t^5], \
   \alpha_5=[u^3t^5],
 \end{equation*}
and 
\begin{equation*}
  \beta_i=[u^it^{i+2}(1-t^{i+3})]=[u^it^{i+2}-u^it^{i+2,i+3}] 
  \quad i=1,2,\ldots, 5 \ \ \mathrm{mod} \ 5;
  \label{bas:beta}
\end{equation*}
we see that $H^{2}( (D^1,S^0)^K)$ 
is generated by $\gamma=[u^{j,j+1}t^{j+2,j+3,j+4}]$, 
with $j$ an arbitrary $\mathrm{mod}$ $5$ integer.
By Theorem \ref{thm:Davis}, $(D^1,S^0)^K$ is an orientable surface 
of genus $5$. Using \eqref{eq:cup2} we can give 
	a presentation of the cup products.
	Since $H^{1}(K_{\omega})$ is non-trivial only when 
	$\omega=\{1,2,3,4,5\}$, 
	a direct calculation  shows that 
	for all $\mathrm{mod}$ $5$ integers $i,j$,  
	$\alpha_i\smile\alpha_j=\beta_i\smile\beta_j=0$, then
	\[\gamma=-\alpha_1\smile\beta_2=\alpha_2\smile\beta_5=
  	 -\alpha_3\smile\beta_3=\alpha_4\smile\beta_1=-\alpha_5\smile\beta_4 \]
    presents all non-trivial cup products. 
  \end{exm}
  \begin{exm}[cf.~\cite{LdM89}, \cite{BLV13}, \cite{GL14}]\label{exm:LdM}
	Let $\lambda=(\lambda_i)_{i=1}^7$ be the $7$-tuple 
	with $\lambda_i\in\mathbb{R}^2$, 
  consisting of the real and imaginary parts of the 
  solutions of the equation $z^7=1$.
  Consider the variety $Z(\Lambda)\subset\mathbb{R}^7$ 
  given by the intersections of 
  quadrics 
  \[\begin{cases}
			\sum_{i=1}^7\lambda_ix_i^2=\bm{0},\\
		\sum_{i=1}^7x_i^2=1.
	  \end{cases}
	  \]
  It can be checked directly that $Z(\Lambda)$ is a 
  smooth $4$--manifold.\footnote{See for example, \cite[Lemma 0.3, p.~58]{BM06}.}

Let $K$ be a simplicial complex with vertex set $[7]$, such 
that $\sigma\subset [7]$ belongs to $K$ if and only if 
the origin of $\mathbb{R}^2$ is in the relative interior of the 
convex hull $\mathrm{conv}(\lambda_i)_{i\in [7]\setminus\sigma}$.
It can be shown that $|K|$ bounds a convex polytope, and 
there is a piecewise differentiable homeomorphism 
$(D^1,S^0)^K\to Z(\Lambda)$.\footnote{See for example, 
  \cite[Lemma 0.12, p.~64]{BM06} for the first statement, and 
  \cite[Lemma 6.3]{Cai15} for the second.} 

Observe that any subset of $[7]$ with cardinality 
 $2$ is a simplex of $K$, and 
any subset of $[7]$ not in $K$ must contain three consecutive points of the 
form $\{i,i+1,i+2\}$, $i=1,2,\dots,7$ $\mathrm{mod}$ $7$ 
(see Figure \ref{fig:7gon}).
Therefore, $(D^1,S^0)^K$ is  simply connected 
(see \cite[Chapter 1, p.~12]{Dav08}). 
Using \eqref{eq:hd}, we can write down 
the orientation class of $(D^1,S^0)^K$ through $H_3(K)$:
after listing all $14$ $3$-faces of $|K|$ and assign to each of
them a coefficient $\pm 1$ to make a cycle, we see that 
(see Proposition \ref{prop:iso1})
\begin{align*}
  \Gamma=&[u_{1,2,4,5}\varepsilon_{3,6,7}-u_{1,2,4,6}\varepsilon_{3,5,7}+
	u_{1,2,5,6}\varepsilon_{3,4,7}+
	u_{1,3,4,6}\varepsilon_{2,5,7}-
	u_{1,3,4,7}\varepsilon_{2,5,6}\\
	&-u_{1,3,5,6}\varepsilon_{2,4,7}+u_{1,3,5,7}\varepsilon_{2,4,6}-u_{1,4,5,7}\varepsilon_{2,3,6}
	+u_{2,3,5,6}\varepsilon_{1,4,7}-u_{2,3,5,7}\varepsilon_{1,4,6}\\
	&+
	u_{2,3,6,7}\varepsilon_{1,4,5}+u_{2,4,5,7}\varepsilon_{1,3,6}
	-u_{2,4,6,7}\varepsilon_{1,3,5}+u_{3,4,6,7}\varepsilon_{1,2,5}]. 
\end{align*}
\begin{figure}
  \begin{center}
          \includegraphics[width=7cm]{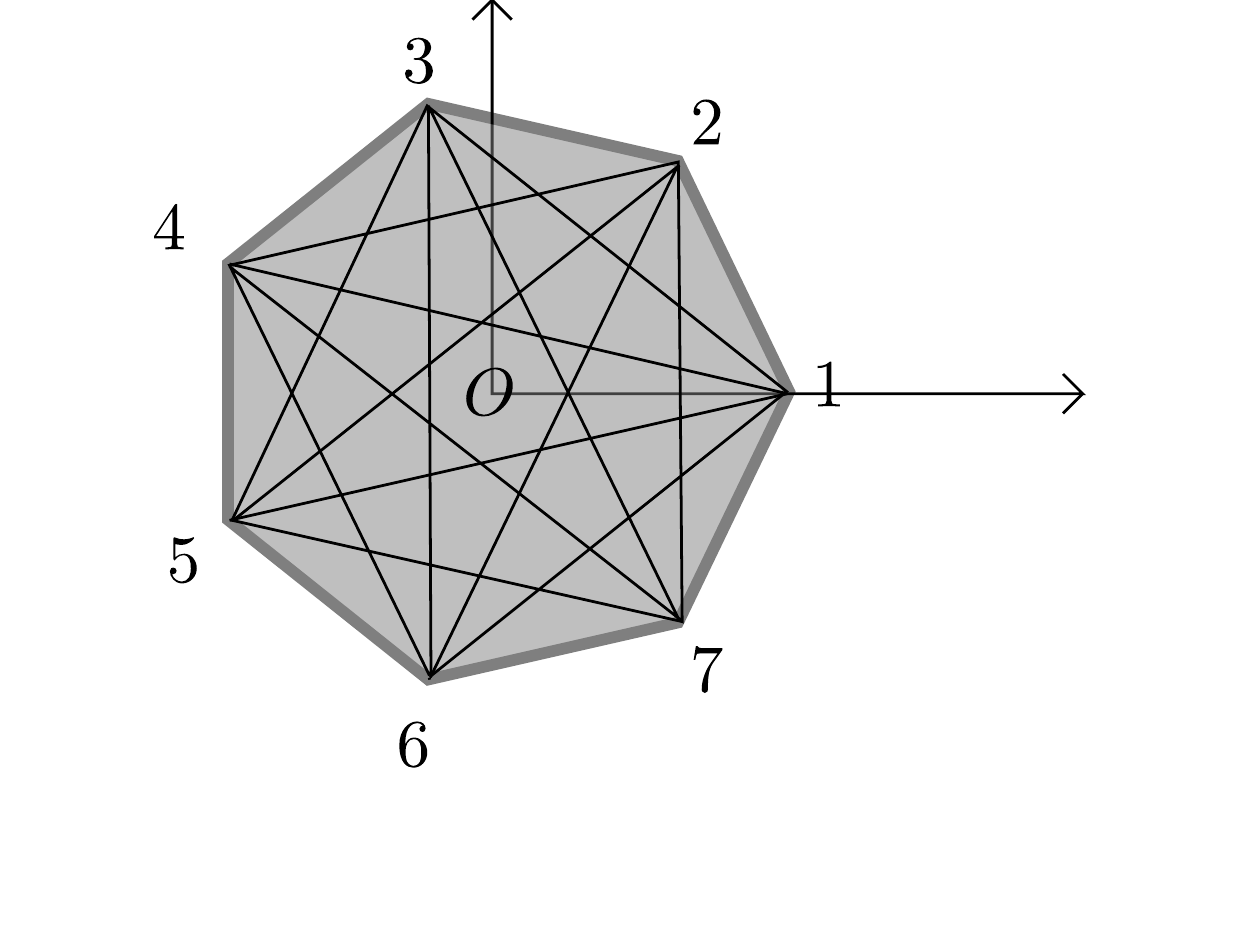}
  \end{center}
  \caption{The Heptagon}
  \label{fig:7gon}
\end{figure}
By Alexander duality, we have  
\begin{equation}
  \widetilde{H}^{3-i-1}(K_{\omega})\cong 
 \widetilde{H}_{i}(K_{[7]\setminus\omega}).
  \label{eq:AD1}
\end{equation}
 It follows that $\widetilde{H}^*(K_{\omega})$ is  
 non-trivial only when 
 $\mathrm{card}(\omega)=0,3,4,7$. In particular, 
  $\widetilde{H}^1(K_{\omega})$ is non-trivial if and 
  only if $\omega$ consists of three or four consecutive 
  $\mathrm{mod}$ $7$ integers (see Figure \ref{fig:fsubcpx}).
 
 As a conclusion,  
 $H^2( (D^1,S^0)^K)$ 
 is torsion-free with $14$ generators
 \[\alpha_i=[u^{i,i+1}t^{i+2}] \quad \text{and}\quad
 \beta_i=[u^{i+1,i+2}t^{i,i+3}],
  \]
 $i=1,2,\ldots,7$ $\mathrm{mod}$ $7$, according to $\mathrm{card}(\omega)=3$
 and $\mathrm{card}(\omega)=4$, respectively. 
 It can be checked that $\alpha_i\smile\alpha_j=0$,  
 and $\alpha_i\smile\beta_{j}$ is non-trivial if 
 and only if $j=i+3$, which generates $H^4((D^1,S^0)^K)$.  For instance, 
 $u^{1,2}t^{3}\smile u^{5,6}t^{4,7}=u^{1,2,5,6}t^{3,4,7}$.  Notice 
 that $\beta_i\smile\beta_{i+4}=\pm u^{i+1,i+2,i+5,i+6}t^{i+3,i+4,i+7}$, 
 which is non-trivial.
 If we change each $\beta_i$ into  
 \[\beta_i'=[u^{i+1,i+2}t^{i}(1-t^{i+3})]=[u^{i+1,i+2}t^{i}-u^{i+1,i+2}t^{i,i+3}],
 \] 
 then a straightforward calculation shows that each $\alpha_i$ has 
 a unique pairing $\beta'_{i+3}$ to make
 a non-trivial cup product, and $\beta_i'\smile\beta'_j$ vanish
 for all $\mathrm{mod}$ $7$ integers $i,j$.
 As a conclusion, we see that $(D^1,S^0)^K$ and $\sharp_{7}S^2\times S^2$ 
 have isomorphic cohomology rings, hence the two $4$--manifolds 
 are homeomorphic, by the classification 
 theorem of Freedman \cite{Fre82} 
 (an explicit diffeomorphism between them 
 is given by Guti\'{e}rrez and L\'{o}pez de Medrano 
 \cite{GL14}).
\begin{figure}
   \begin{center}
           \includegraphics[width=6cm]{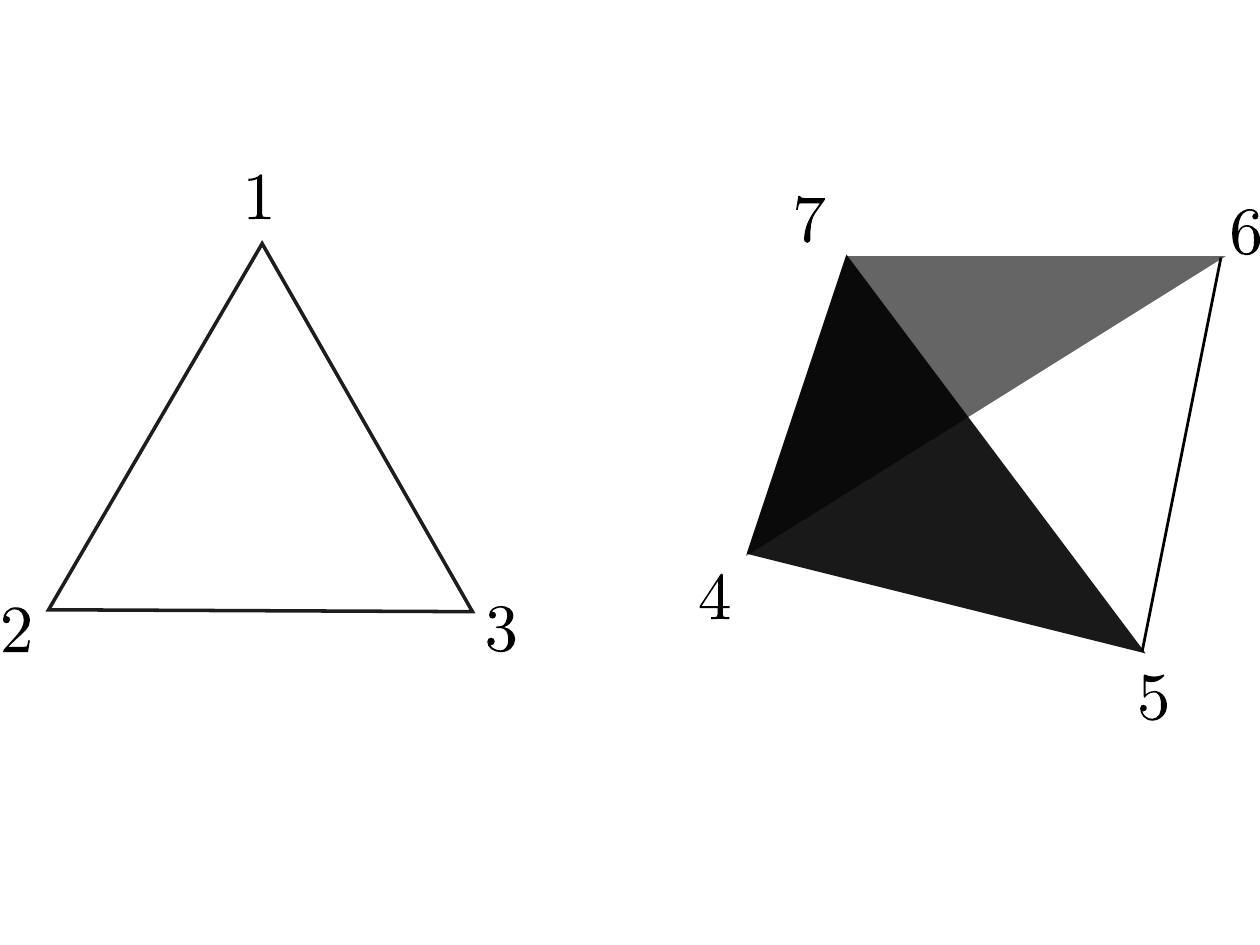}
   \end{center}
   \caption{Full subcomplexes $K_{1,2,3}$ and $K_{4,5,6,7}$}
   \label{fig:fsubcpx}
 \end{figure}

 By Poincar\'{e} duality, the intersection of submanifolds 
 can be understood through cup products. 
 For instance, by \eqref{eq:cap2}, we have
\begin{align} 
  [u^{1,2} t^3\frown\Gamma]&= [\underbrace{u_{4,5}\varepsilon_{6,7}-
	u_{4,6}\varepsilon_{5,7}+
	u_{5,6}\varepsilon_{4,7}}_{K_{4,5,6,7}}
	+\underbrace{u_{4,5}\varepsilon_{3,6,7}-
	u_{4,6}\varepsilon_{3,5,7}+
	u_{5,6}\varepsilon_{3,4,7}}_{K_{3,4,5,6,7}}]\label{eq:chop1}\\
    &=[\underbrace{u_{4,5}\varepsilon_{6,7}-
    u_{4,6}\varepsilon_{5,7}+
	u_{5,6}\varepsilon_{4,7}}_{K_{4,5,6,7}}], 
	\label{eq:chop2}
\end{align}
	because $K_{3,4,5,6,7}$ is acyclic. 
Here the geometric meaning of the 
class on the right-hand side of \eqref{eq:chop2} is as 
follows: consider the sphere 
\[ S=\{(x_i)_{i=1}^7\in (D^1,S^0)^K\mid x_1=x_2=x_3=-1, x_7=1\}\]
with suitable orientation, and let $s_7\co\mathbb{R}^7\to\mathbb{R}^7$ 
be the reflection changing the sign of the last coordinate, then 
$[u_{4,5}\varepsilon_{6,7}-
    u_{4,6}\varepsilon_{5,7}+
	u_{5,6}\varepsilon_{4,7}]$ corresponds to 
	the class $[S]+s_7[S]$ (see Definition \ref{def:bases}). 
	Therefore, if we use the representative
$[u^{1,2}t^3(1-t^7)]$ in \eqref{eq:chop1} instead of $[u^{1,2} t^3]$, 
we will get the class $s_7[S]$, which is represented by a submanifold 
whose intersection with the sphere
\[S'=\{(x_i)_{i=1}^7\in (D^1,S^0)^K\mid x_4=x_5=x_6=x_7=-1\}\]
is the point with constant coordinates $-1$. 
  In the same way, by expanding $[u^{5,6}t^{4}(1-t^7)\frown\Gamma]$ we
  see that it coincides with suitable orientation class $[S']$.
  From the Poincar\'{e} duality we can read the
  plumbing of spheres $s_7(S)$ and $S'$, which can also be checked 
  directly.
  \end{exm}

 \section{Cohomology of certain polyhedral products}\label{sec:coho}
Suppose $J=(j_i)_{i=1}^m$
is an $m$-tuple of positive integers. In this section we 
consider the relation between $H^*( (D^1,S^0)^K)$ 
and $H^*( (D^1,S^0)^{K(J)})$, where $K(J)$ is given in 
Definition \ref{def:BBCG}. 

 Let 
$\left(\mathbb{Z}\langle \wt{v}^i,\wt{u}^i\rangle_{i=1}^m, 
\mathrm{d}\right)$ be the 
free algebra generated by $2m$ generators
$\wt{v}^i$ with $\mathrm{deg}(\wt{v}^i)=j_i$ and 
$\wt{u}^i$ with $\mathrm{deg}(\wt{u}^i)=j_i-1$, respectively, 
$i=1,2,\ldots,m$;
for two homogeneous elements $x,y\in \mathbb{Z}\langle \wt{v}^i,\wt{u}^i\rangle_{i=1}^m$,
the differential $\mathrm{d}$ satisfies
\begin{equation}
  \mathrm{d}(xy)=(\mathrm{d}x)y+(-1)^{\mathrm{deg}(x)}x(\mathrm{d}y),
  \label{eq:Leib}
\end{equation}
with $\mathrm{d}\wt{u}^i=\wt{v}^i$ and $\mathrm{d}\wt{v}^i=0$.
Let $\left(R^*(J),\mathrm{d}\right)$ be the quotient of 
$\left(\mathbb{Z}\langle \wt{v}^i,\wt{u}^i\rangle_{i=1}^m, 
\mathrm{d}\right)$, subject to relations
 \begin{equation}
      x^ix^j=(-1)^{\mathrm{deg}(x^i)\mathrm{deg}(x^j)}x^jx^i, \quad
 (x^i)^2=\begin{cases}
   x^i & \text{if $\mathrm{deg}(x^i)=0$,}\\
   0 & \text{otherwise},
 \end{cases}
   \label{eq:relation1}
 \end{equation}
 where $x^i$ (resp.~$x^j$) is either $\widetilde{u}^i$ or $\widetilde{v}^i$ 
 (resp.~$\widetilde{u}^j$ or $\widetilde{v}^j$), for \emph{distinct} 
 $i,j=1,2,\ldots,m$, 
 together with
 \begin{equation}
   \widetilde{u}^i\widetilde{v}^i=0, \quad  
   \widetilde{v}^i\widetilde{u}^i=\begin{cases}
	 \wt{v}^i & \text{if $\mathrm{deg}(\wt{u}^i)=0$,}\\
	 0   & \text{otherwise},
 \end{cases} \quad i=1,2,\ldots,m.
   \label{eq:relation2}
 \end{equation}
 For a simplicial complex $K$ with vertex set $[m]$,
 the corresponding \emph{Stanley-Reisner ideal} $\mathcal{I}_{K}(J)\subset R^*(J)$ is 
 generated by all square-free monomials of the form
 $\widetilde{v}^{\tau}=\prod_{i\in\sigma}\wt{v}^{i}$, 
 where $\tau$ is \emph{not} a simplex of $K$. 

 Let $\left(R_K^*(J),\mathrm{d}\right)$ be the quotient  
 $R^*(J)/\mathcal{I}_{K}(J)$. It is well-defined since $\mathcal{I}_K(J)$ is 
 closed under $\mathrm{d}$. Clearly by relations \eqref{eq:relation1} and 
 \eqref{eq:relation2}, each monomial in $R^*_K(J)$ can be uniquely written 
 in the square-free form $\wt{v}^{\sigma}\wt{u}^{\tau}=\prod_{i=1}^mx^i$, 
 where $x^i=\wt{v}^i$ if $i\in\sigma$, $x^{i}=\wt{u}^i$ if $i\in \tau$ and 
 $x^i=1$ otherwise,
 $\sigma,\tau$ disjoint subset of $[m]$, $\sigma\in K$. 
 
 First let $J=(\bm{1})$ be constant with integers $1$, and denote 
 $R^*_K(\bm{1})$ simply by $R^*_K$. A comparison
 of $R^*_K$ with \eqref{eq:cup2} shows that, if we identify
 $\wt{v}^i$ with $u^i$, $\wt{u}^i$ with $t^i$ and the identity $1$ with 
 the void word, then we have the cochain isomorphism
 $\left(R^*_{K},\cdot\right)\cong \left(C^{*}( (D^1,S^0)^K),\smile\right)$. 
 Together with Theorem \ref{thm:Whitney}, we see that
 \[\left(H^*(R^*_{K(J)}),\cdot\right)\cong 
   \left(H^*( (D^1,S^0)^{K(J)}),\smile\right).\] 
 
   Notice that $R^*_{K(J)}$ has $2d(J)$ generators, $d(J)=\sum_{i=1}^{m}j_i$,
   which we shall still denote by $u^i$ of degree $1$ and $t^i$ of 
   degree $0$, $i=1,2,\ldots, d(J)$.
 Actually,
 the algebra $H^*(R^*_{K(J)})$ can be refined as follows:
 \begin{thm}\label{thm:iso}
   We have isomorphisms
   \begin{equation}
	  H^*(R^*_K(J))\cong H^*(R^*_{K(J)})\cong 
	  H^*( (D^1,S^0)^{K(J)})	\cong H^*((D^{j_i},S^{j_i-1})_{i=1}^m)^K
	   \label{iso:J}
   \end{equation}
   of graded algebras. Moreover, by setting 
   \begin{equation}
	(\sigma,\omega)_J=\sum_{k\in\omega}(j_k-1)\mathrm{card}(\{i\in\sigma\mid i>k\})+
   \sum_{k\in\omega}(j_k-1)\sum_{r\in\omega,r>k}(j_r-1)
	 \label{def:soJ}
   \end{equation}
   for $\sigma\subset \omega\subset [m]$, the mapping 
   \begin{equation}
	 \begindc{\commdiag}[15]
   \obj(0,1)[aa]{$\eta_J\co \bigoplus_{\omega\subset [m]}\widetilde{C}^*(K_{\omega})$}
   \obj(7,1)[bb]{$ R^*_K(J)$}
   \obj(0,0)[cc]{$\sigma^*_{\omega}$}
   \obj(7,0)[dd]{$ (-1)^{(\sigma,\omega)_J}\wt{v}^{\sigma}\wt{u}^{\omega\setminus\sigma}$}
   \mor{aa}{bb}{}
   \mor{cc}{dd}{}[+1,6]
 \enddc
	 \label{def:etaJ}
   \end{equation}
   yields a cochain map ($\sigma^*_{\omega}\in \widetilde{C}^*(K_{\omega})$ is a dual simplex)
	 which induces an isomorphism 
	 \[\bigoplus_{\omega\subset [m]}\widetilde{H}^*(K_{\omega})\cong H^*(R^*_K(J))\]
   of ungraded $\mathbb{Z}$-modules.
 \end{thm}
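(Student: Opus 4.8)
The plan is to reduce the chain \eqref{iso:J} to its one genuinely new link, the isomorphism $H^*(R^*_K(J))\cong H^*(R^*_{K(J)})$, and to treat the \eqref{def:etaJ} statement separately. The last two isomorphisms in \eqref{iso:J} are already available: the identification $(R^*_{K(J)},\cdot)\cong(C^*((D^1,S^0)^{K(J)}),\smile)$ together with Theorem \ref{thm:Whitney} gives $H^*(R^*_{K(J)})\cong H^*((D^1,S^0)^{K(J)})$ as graded algebras, and Lemma \ref{lem:BBCG} supplies the homeomorphism $(D^1,S^0)^{K(J)}\cong(\underline{D},\underline{S})^K$. So the work concentrates on the first isomorphism and on \eqref{def:etaJ}.

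For the first isomorphism I would write down an explicit morphism of differential graded algebras, with $b_i=\min B_i$,
\[ \phi\co R^*_K(J)\longrightarrow R^*_{K(J)},\qquad \wt v^i\mapsto \prod_{a\in B_i}u^a,\qquad \wt u^i\mapsto t^{b_i}\!\!\prod_{a\in B_i\setminus\{b_i\}}\!\!u^a. \]
That $\phi$ commutes with $\mathrm d$ is immediate from $\mathrm d t^a=u^a$, $\mathrm d u^a=0$ and \eqref{eq:Leib}; that it kills $\mathcal I_K(J)$ and respects \eqref{eq:relation1}, \eqref{eq:relation2} is a routine check from the block description of $K(J)$ (a set $S\subset[d(J)]$ is a face iff $\{i\mid B_i\subset S\}\in K$), which sends each missing face $\tau$ of $K$ to the vanishing monomial $u^{\bigcup_{i\in\tau}B_i}$. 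Granting that $\phi$ is a quasi-isomorphism, the first isomorphism in \eqref{iso:J} then follows \emph{as graded algebras}, and this multiplicativity is exactly the gain over a bare module statement.

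To prove $H^*(\phi)$ is an isomorphism I would transport $\phi$ across the cochain isomorphisms $\eta_J$ of \eqref{def:etaJ} and $\eta$ of Proposition \ref{prop:iso1} (the latter for $K(J)$), obtaining $\wt\phi=\eta^{-1}\circ\phi\circ\eta_J$ between $\bigoplus_{\omega}\wt C^*(K_\omega)$ and $\bigoplus_{\Omega}\wt C^*(K(J)_\Omega)$. Reading off the formulas, $\wt\phi$ carries the $\omega$-summand into the single summand indexed by the block union $\Omega(\omega)=\bigcup_{i\in\omega}B_i$. Two observations then finish the count. First, if $\Omega$ is \emph{not} a union of full blocks, say $a\in B_i\cap\Omega$ with $B_i\not\subset\Omega$, then $B_i\not\subset S$ for every $S\subset\Omega$, so adding or deleting $a$ leaves $\{i'\mid B_{i'}\subset S\}$ unchanged; thus $a$ is a cone point of $K(J)_\Omega$ and $\wt H^*(K(J)_\Omega)=0$, so the summands missed by $\wt\phi$ are acyclic. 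Second, for a block union one has $K(J)_{\Omega(\omega)}=K_\omega(J|_\omega)$, which by the iterated simplicial wedge presentation is $\mathrm{PL}$ homeomorphic to the $s(\omega)$-fold suspension of $|K_\omega|$, where $s(\omega)=\sum_{i\in\omega}(j_i-1)$ (compare $|K(v_i)|\simeq S^0*|K|$, \cite[Proposition 2.2]{CP15}); on this summand $\wt\phi$ is the corresponding suspension cochain map, hence a degree-$s(\omega)$ quasi-isomorphism. The main obstacle lies precisely here: verifying that $\wt\phi$ is the suspension map on each block-union summand \emph{with the signs coming out correctly}, together with the cone bookkeeping for the remaining $\Omega$. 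Once this is settled, $H^*(\phi)$ is an isomorphism.

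For \eqref{def:etaJ} the strategy is computational. Comparing the simplicial coboundary \eqref{eq:cobound2} on $\wt C^*(K_\omega)$ with the differential on $R^*_K(J)$ coming from \eqref{eq:Leib}, the requirement that $\eta_J$ commute with $\mathrm d$ reduces, term by term in $i\in\omega\setminus\sigma$ with $\sigma\cup\{i\}\in K$, to the single congruence
\[ (\sigma\cup\{i\},\omega)_J-(\sigma,\omega)_J\equiv\Big(\sum_{\substack{k<i\\ k\in\sigma}}j_k+\sum_{\substack{k<i\\ k\in\omega\setminus\sigma}}(j_k-1)\Big)-(i,\sigma)\pmod 2, \]
and one checks that the explicit $(\sigma,\omega)_J$ of \eqref{def:soJ}—whose two summands record the Koszul reorderings of the degree-$(j_k-1)$ letters $\wt u^k$ past the $\wt v$'s and past one another—satisfies this recursion, so $\eta_J$ is a cochain map. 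Since $\eta_J$ carries the basis $\{\sigma^*_\omega\mid \sigma\subset\omega,\ \sigma\in K\}$ bijectively, up to the unit sign $(-1)^{(\sigma,\omega)_J}$, onto the monomial basis $\{\wt v^\sigma\wt u^{\omega\setminus\sigma}\}$ of $R^*_K(J)$, it is an isomorphism of cochain complexes; it therefore induces the asserted isomorphism $\bigoplus_\omega\wt H^*(K_\omega)\cong H^*(R^*_K(J))$ of ungraded $\mathbb Z$-modules (the grading being redistributed by the degrees $j_i$, which is why no grading is claimed here).
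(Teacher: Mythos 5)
Most of your proposal coincides with the paper's proof, more closely than you may realize. The reduction of \eqref{iso:J} to its first isomorphism is the same (the identification $R^*_{K(J)}\cong C^*((D^1,S^0)^{K(J)})$ plus Theorem \ref{thm:Whitney}, plus Lemma \ref{lem:BBCG}). Your map $\phi$ is \emph{literally} the paper's embedding: in the paper's notation $\wt{k}=k+\sum_{r<k}(j_r-1)$, one has $b_i=\min B_i=\wt{i}$ and $B_i\setminus\{b_i\}=\wt{B}_i$, so your $\phi$ agrees with the composite $\varpi_J=\varpi_{j_1}\varpi_{j_2}\cdots\varpi_{j_m}$ of the single-wedge maps \eqref{def:varpi} taken along the wedge sequence \eqref{eq:KJ}, and the paper's setting $\wt{v}^k=u^{\wt{k}}u^{\wt{B}_k}$, $\wt{u}^k=t^{\wt{k}}u^{\wt{B}_k}$ is your formula. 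Likewise your sign recursion for $\eta_J$ is equivalent to the paper's identity $(\sigma\cup\{i\},\omega)_J-(\sigma,\omega)_J=\sum_{k\in\omega,\,k<i}(j_k-1)$ (absorb $(i,\sigma)=\mathrm{card}\{k\in\sigma\mid k<i\}$ into the $\sum_{k<i,\,k\in\sigma}j_k$ term), so that part is the paper's computation and is fine, as is your observation that $\eta_J$ is bijective on bases up to unit signs. Your cone-point argument showing $\wt{H}^*(K(J)_\Omega)=0$ when $\Omega$ is not a union of full blocks is correct and complete, and the identification $K(J)_{\Omega(\omega)}=K_\omega(J|_\omega)$ is sound.

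The genuine gap is the step you flag yourself: that on each block-union summand $\wt\phi$ is the $s(\omega)$-fold suspension cochain map with the signs coming out correctly. The $\mathrm{PL}$ homeomorphism of $|K_\omega(J|_\omega)|$ with an iterated suspension of $|K_\omega|$ gives an \emph{abstract} additive isomorphism $\wt{H}^p(K_\omega)\cong\wt{H}^{p+s(\omega)}(K(J)_{\Omega(\omega)})$, but it says nothing about the particular cochain map $\sigma^*_\omega\mapsto\pm\bigl(\bigcup_{i\in\sigma}B_i\cup\bigcup_{i\in\omega\setminus\sigma}(B_i\setminus\{b_i\})\bigr)^*$ inducing it; realizing the suspension isomorphism at the cochain level, compatibly with $\eta$ and $\eta_J$, is exactly the hard content of the theorem, so as written your proof of the first isomorphism is incomplete at its decisive point. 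The paper closes precisely this hole by factoring into single wedges: Lemma \ref{lem:varpi} proves each $\varpi_i$ is a quasi-isomorphism via the short exact sequence $0\to R^*_{K}|_{\omega}\to R^{*}_{K(v_i)}|_{\chi_i(\omega)\cup\{i+1\}}\to Q^*\to 0$, where $Q^*$ (spanned by the monomials carrying $t^{i+1}$) is a degree-shifted copy of $\wt{C}^*(K(v_i)_{\chi_i(\omega)})$ and is acyclic because $K(v_i)_{\chi_i(\omega)}$ is the star of $v_i$ --- which is nothing other than your own cone-point observation applied one wedge at a time. So the repair is within your toolkit: replace the all-at-once suspension identification by the factorization $\phi=\varpi_{j_1}\varpi_{j_2}\cdots\varpi_{j_m}$ and run the injective-with-acyclic-cokernel argument stepwise; no suspension-sign bookkeeping is then needed, and your graded-algebra conclusion survives since $\phi$ itself is multiplicative.
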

 \begin{remark}\label{rem:iso}
   The second isomorphism of \eqref{iso:J} 
   follows from Lemma \ref{lem:BBCG}. 
   When $j_i$
   is even, $i=1,2,\ldots,m$, this can be deduced from 
   \cite[Proposition 6.2]{BBCG10b}.
   In particular, when $J=(\bm{2})$ is constant with integers $2$, 
   $R^{*}_K(\bm{2})$ is the well-known algebra given in 
   \cite{BBP04} and \cite{Pan08} for the cohomology of 
   the moment-angle complex $(D^2,S^1)^K$.
    \end{remark}
 In order to prove Theorem \ref{thm:iso}, we consider the simplicial 
 wedge $K(v_i)$ on the $i$-th vertex (see \eqref{def:sw}). 
  Denote by $\varpi_i\co R_K^*\to R_{K(v_i)}^*$ 
  the additive homomorphism such that for each monomial 
  $u^{\sigma}t^{\tau}\in R_K^p$,
  \begin{equation}
	\varpi_i(u^{\sigma}t^{\tau})=\begin{cases}
		u^{\chi_i(\sigma)}t^{\chi_i(\tau)}\in 
		R^p_{K(v_i)} 
	  &\text{if $i\not\in \sigma\cup\tau $},\\
	  u^{\chi_i(\sigma)}t^{\chi_i(\tau)}u^{i+1}\in 
	  R^{p+1}_{K(v_i)} &\text{otherwise},
	\end{cases}
	\label{def:varpi}
  \end{equation}
 where $\chi_i$ is a label-shifting map such that
 each $\{i_k\}_{k=0}^l\subset [m]$ is sent to $\{i'_k\}_{k=0}^l\subset [m+1]$, 
in which $i'_k=i_k$ if $i_k\leq i$ and $i'_k=i_k+1$ otherwise
(thus the label $i+1$ is skipped in the image); 
$\chi_i(\emptyset)=\emptyset$. It can be checked easily that $\varpi_i$
is well-defined and preserves the differential $\mathrm{d}$ on
both sides.

In what follows, let $R^*_K|_{\omega}$ be the subalgebra generated by 
$u^{\sigma}t^{\omega\setminus\sigma}$, $\sigma\subset\omega$. Clearly 
  as a $\mathbb{Z}$-module, with $\omega$ running through subsets of $[m]$, 
  $R^*_K$ is a direct sum with summands $R^*_{K}|_\omega$.
  \begin{lem}\label{lem:varpi}
  The mapping $\varpi_{i}\co R_K^*\to R_{K(v_i)}^* $  
  induces an additive isomorphism on passage to cohomology.
  More precisely,  
  $\varpi_i$ induces isomorphisms
  \begin{equation}
	 H^p(R^*_{K}|_\omega)\cong H^{p+1}(R^*_{K(v_i)}|_{\chi_i(\omega)\cup\{i+1\}})
	\label{iso:withi}
  \end{equation}
  if $i\in\omega$, and isomorphisms 
  \begin{equation}
	H^p(R^*_{K}|_\omega)\cong H^{p}(R^*_{K(v_i)}|_{\chi_i(\omega)})
	\label{iso:withouti}
  \end{equation}
  if $i\not\in\omega$, for each $p\geq 0$.
 \end{lem}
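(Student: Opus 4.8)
The plan is to exploit the $\mathbb{Z}$-module splitting $R^*_K=\bigoplus_{\omega\subset[m]}R^*_K|_\omega$ recorded above, together with its analogue for $K(v_i)$. Since the coboundary \eqref{eq:cobound1} only converts a $t$-letter into a $u$-letter and hence preserves the support $\sigma\cup\tau=\omega$, each $R^*_K|_\omega$ is a subcomplex, and likewise for $K(v_i)$, so both sides split as direct sums of cochain complexes. By \eqref{def:varpi} the additive map $\varpi_i$ carries the whole summand $R^*_K|_\omega$ into a single summand of $R^*_{K(v_i)}$: into $R^*_{K(v_i)}|_{\chi_i(\omega)}$ when $i\notin\omega$ (no degree shift), and into $R^*_{K(v_i)}|_{\chi_i(\omega)\cup\{i+1\}}$ when $i\in\omega$ (shifting degree up by one, because of the appended $u^{i+1}$). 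I would first record that, as $\omega$ ranges over subsets of $[m]$, these targets are exactly the summands of $R^*_{K(v_i)}$ indexed by subsets meeting $\{i,i+1\}$ in either $\emptyset$ or all of $\{i,i+1\}$; the summands indexed by subsets containing exactly one of $i,i+1$ lie outside the image and will have to be shown acyclic.

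The combinatorial heart is to read off the full subcomplexes of $K(v_i)$ from \eqref{def:sw}. Writing $\Delta$ for the edge on $\{i,i+1\}$ and $\partial\Delta=\{\{i\},\{i+1\}\}$, that formula reads $K(v_i)=(\partial\Delta*K_{[m]\setminus\{i\}})\cup(\Delta*\mathrm{Lk}(v_i,K))$. From this I would extract three facts via the order-preserving relabelling $\chi_i$: (i) if $i\notin\omega$ then $\chi_i$ identifies $K(v_i)_{\chi_i(\omega)}$ with $K_\omega$; (ii) if a subset of $[m+1]$ contains exactly one of $i,i+1$, its full subcomplex in $K(v_i)$ is a cone with that vertex as apex, hence acyclic; and (iii) if $i\in\omega$ then $K(v_i)_{\chi_i(\omega)}$ is the cone $\{i\}*K_{\omega\setminus\{i\}}$, again acyclic.

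With these in hand the two displayed isomorphisms follow. For $i\notin\omega$, fact (i) together with the compatibility of $\varpi_i$ with $\mathrm{d}$ shows that $\varpi_i\colon R^*_K|_\omega\to R^*_{K(v_i)}|_{\chi_i(\omega)}$ is an isomorphism of cochain complexes, giving \eqref{iso:withouti}. For $i\in\omega$, set $\omega'=\chi_i(\omega)\cup\{i+1\}$ and observe that $\varpi_i$ sends $R^*_K|_\omega$ bijectively, raising degree by one, onto the submodule $P\subset R^*_{K(v_i)}|_{\omega'}$ spanned by the monomials carrying the letter $u^{i+1}$. Because $\mathrm{d}$ never turns a $u$ back into a $t$, $P$ is a subcomplex, and $\varpi_i\colon R^*_K|_\omega\xrightarrow{\cong}P$ is an isomorphism of complexes up to the degree shift. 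The quotient $Q=R^*_{K(v_i)}|_{\omega'}/P$ is spanned by the monomials carrying $t^{i+1}$; factoring out this degree-zero letter identifies $Q$ as a cochain complex with $R^*_{K(v_i)}|_{\chi_i(\omega)}$, which by Proposition \ref{prop:iso1} and fact (iii) computes $\widetilde{H}^*(\{i\}*K_{\omega\setminus\{i\}})=0$. Hence the short exact sequence $0\to P\to R^*_{K(v_i)}|_{\omega'}\to Q\to 0$ makes $P\hookrightarrow R^*_{K(v_i)}|_{\omega'}$ an isomorphism on cohomology, and \eqref{iso:withi} drops out.

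Finally I would reassemble: distinct $\omega$ hit distinct summands of $R^*_{K(v_i)}$ isomorphically on cohomology, while by fact (ii) every remaining summand is acyclic, so summing over $\omega$ yields the additive isomorphism $H^*(R^*_K)\cong H^*(R^*_{K(v_i)})$. I expect the main obstacle to be exactly the bookkeeping of the second paragraph — correctly recognising from \eqref{def:sw} which full subcomplexes of $K(v_i)$ are cones, and identifying the degree-shifted image of $\varpi_i$ with the $u^{i+1}$-subcomplex $P$ — together with checking that $\varpi_i$ respects $\mathrm{d}$ and the signs $(i,\sigma)$ on each summand; once these are settled, the homological algebra (one short exact sequence and a batch of acyclic cones) is routine.
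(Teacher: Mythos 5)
Your proposal is correct and follows essentially the same route as the paper's proof: the case $i\notin\omega$ via the simplicial identification of $K(v_i)_{\chi_i(\omega)}$ with $K_\omega$ and Proposition \ref{prop:iso1}, and the case $i\in\omega$ via the short exact sequence $0\to R^*_K|_\omega\xrightarrow{\varpi_i} R^*_{K(v_i)}|_{\chi_i(\omega)\cup\{i+1\}}\to Q^*\to 0$, whose quotient (your $Q$, spanned by the monomials carrying the degree-zero letter $t^{i+1}$) is identified with the complex of the cone $K(v_i)_{\chi_i(\omega)}=\{i\}*K_{\omega\setminus\{i\}}$ and is therefore acyclic. Your fact (ii), verifying that the summands of $R^*_{K(v_i)}$ indexed by subsets meeting $\{i,i+1\}$ in exactly one element are cones and hence acyclic, is a small but welcome completion of the global additive isomorphism that the paper leaves implicit.
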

 \begin{proof}
   First observe that if $i\not\in\omega$, then 
   the full subcomplex $K(v_i)_{\chi_i(\omega)}$ and $K_{\omega}$ are 
   simplicially isomorphic. Then
   \eqref{iso:withouti} follows from Proposition \ref{prop:iso1}.
   It remains to prove \eqref{iso:withi}. Suppose $i\in\omega$, 
   and consider the short exact sequence
   \[\begin{CD}
	   0@>>> R^*_{K}|_{\omega}@>\varpi_i>>  R^{*}_{K(v_i)}|_{\chi_i(\omega)\cup\{i+1\}}
	   @>>>Q^*@>>>0,
	 \end{CD}\]
	 in which $Q^{*}$ is the graded quotient.	
	 Let $f\co \widetilde{C}^{*}(K(v_i)_{\chi_i(\omega)})\to Q^{*}$ 
be the additive homomorphism sending each dual $p$-simplex 
$\sigma^*_{\chi_i(\omega)}$ to 
the element with representative 
$u^{\sigma}t^{\chi_i(\omega)\setminus\sigma}t^{i+1}$. We see that 
$f$ is a cochain isomorphism shifting the degrees up by one, since 
\[\mathrm{d}(u^{\sigma}t^{\tau}t^{i+1})=\mathrm{d}(u^{\sigma}t^{\tau})t^{i+1}+
 (-1)^{p+1}u^{\sigma}t^{\tau}\mathrm{d}t^{i+1},\]
 where the second summand vanishes in $Q^{*}$ (see \eqref{def:varpi}). It
 is easy to see that $K(v_i)_{\chi_i(\omega)}$ is 
 the star of $v_i$ in $K(v_i)$, which is acyclic, so
 is
 $Q^*$. Then \eqref{iso:withi} follows.
  \end{proof}
Now we specify a sequence of simplicial wedge constructions for $K(J)$:
it can be checked that 
 \begin{align*}
	K(J)=&K\underbrace{(v_m)(v_{m+1})\cdots (v_{m+j_m-2})}_{j_m-1}
	(v_{m-1})(v_m)\cdots (v_{m-1+j_{m-1}-2})\cdots
   \numberthis \label{eq:KJ}
    \\
	&(v_{m-i+1+j_{m-i+1}-2})\underbrace{(v_{m-i})(v_{m-i+1})\cdots
	(v_{m-i+j_{m-i}-2})}_{j_{m-i}-1}
	\\& (v_{m-i-1})\cdots
	(v_{2+j_{2}-2})\underbrace{(v_1)(v_2)\cdots (v_{1+j_1-2})}_{j_1-1},
  \end{align*}
 where the block marked by $j_i-1$ is deleted if $j_i=1$.
 \begin{proof}[Proof of Theorem \ref{thm:iso}]
   With respect to \eqref{eq:KJ}, we define $m$ composite homomorphisms 
   \[\varpi_{j_k}=\varpi_{k+j_k-2}\varpi_{k+j_k-3}\ldots\varpi_{k+1}\varpi_{k},\]
   $k=1,2,\ldots,m$, and denote their composition by 
   \[ \varpi_J=\varpi_{j_1}\varpi_{j_2}\ldots \varpi_{j_m}.\]
   Suppose 
   $\omega=\{i_1,i_2,\ldots,i_l\}$ is a subset of $[m]$, $i_1<i_2<\ldots<i_l$.
   Let  $u^{\sigma}t^{\omega\setminus\sigma}$ ($\sigma\subset\omega$) 
   be the monomial $\prod_{k=1}^lx^{i_k}$ 
   with $x^{i_k}=u^{i_k}$ if $i_k\in \sigma$, and 
   $x^{i_k}=t^{i_k}$ otherwise. By definition \eqref{def:varpi} 
   and a straightforward calculation, we have
\begin{align}
 \varpi_J(u^{\sigma}t^{\omega\setminus\sigma})=
 \varpi_{j_1}\varpi_{j_2}\ldots\varpi_{j_{i_m}}(x^{i_1}\ldots x^{i_l})
 =x^{\widetilde{i_1}}x^{\wt{i_2}}\ldots x^{\wt{i_l}}x^{\wt{B}_{i_l}}
 x^{\wt{B}_{i_{l-1}}}\ldots x^{\wt{B}_{i_1}},
 \label{eq:hard}
\end{align}
where $\wt{k}=k+\sum_{r<k}(j_{r}-1)$ and 
\[
	\widetilde{B}_{k}=\{k+1+\sum_{r<k}(j_r-1),
  k+2+\sum_{r<k}(j_r-1),\ldots, k+j_k-1+\sum_{r<k}(j_r-1)\}, 
\]
which is empty if $j_k=1$.   Let 
$(\sigma,\omega)_J$ be the $\mathrm{mod}$ $2$ integer such that 
\[x^{\widetilde{i_1}}x^{\wt{i_2}}\ldots x^{\wt{i_l}}x^{\wt{B}_{i_l}}
  x^{\wt{B}_{i_{l-1}}}\ldots x^{\wt{B}_{i_1}}=
  (-1)^{(\sigma,\omega)_J}x^{\widetilde{i_1}}x^{\wt{B}_{i_1}}
  x^{\widetilde{i_2}}x^{\wt{B}_{i_2}}\ldots 
  x^{\widetilde{i_l}}x^{\wt{B}_{i_l}}.\]
  Clearly by \eqref{eq:relation1},  $(\sigma,\omega)_J$ coincides with \eqref{def:soJ}. 
  As a conclusion, 
 by setting $\wt{v}^k=u^{\wt{k}}u^{\wt{B}_k}$ and $\wt{u}^k=t^{\wt{k}}u^{\wt{B}_k}$, 
 $k=1,2,\ldots,m$, we see that $R^*_{K}(J)$ is embedded in $R^*_{K(J)}$ 
 as a subalgebra generated by the image of $\varpi_J\co R^*_{K}\to R^*_{K(J)}$. 
 Therefore, on passage to cohomology, 
 the embedding above induces an isomorphism, by Lemma \ref{lem:varpi}. 
 
 The second statement follows from the first one, together with
 Proposition \ref{prop:iso1}. 
 Here we check it directly. Note that by \eqref{def:soJ}, 
 for $i\in\omega\setminus\sigma$,
  we have
\[(\sigma\cup\{i\},\omega)_J-(\sigma,\omega)_J=\sum_{k\in\omega,k<i}(j_k-1).\]
 It turns out that 
 \begin{align*}
	\eta_J( \mathrm{d}\sigma^*_{\omega} )
	&=\sum_{\substack{i\in\omega\setminus\sigma\\
	  (\sigma\cup\{i\})\in K_{\omega}}}
	  (-1)^{\mathrm{card}\left( \{k\in\sigma\mid k<i\} \right)}
	  \eta_J\left( (\sigma\cup\{i\})^*_{\omega} \right)
	  \\
	 &=\sum_{\substack{i\in\omega\setminus\sigma\\
	   (\sigma\cup\{i\})\in K_{\omega}}}
	  (-1)^{\mathrm{card}\left( \{k\in\sigma\mid k<i\}\right)+(\sigma\cup\{i\},\omega)_J}
	  \widetilde{v}^{\sigma\cup\{i\}}\widetilde{u}^{\omega\setminus(\sigma\cup\{i\})}\\
	  &=\sum_{\substack{i\in\omega\setminus\sigma\\
		(\sigma\cup\{i\})\in K_{\omega}}}
	  (-1)^{\mathrm{card}\left( \{k\in\sigma\mid k<i\}\right)+
	  \sum_{k<i,k\in\omega}(j_k-1)+(\sigma,\omega)_J}
	  \widetilde{v}^{\sigma\cup\{i\}}\widetilde{u}^{\omega\setminus(\sigma\cup\{i\})}\\
      &=\sum_{\substack{i\in\omega\setminus\sigma\\
		(\sigma\cup\{i\})\in K_{\omega}}}
	  (-1)^{\sum_{k<i,k\in\sigma}j_k+\sum_{k<i,k\in\omega\setminus\sigma}(j_k-1)+(\sigma,\omega)_J}
	  \widetilde{v}^{\sigma\cup\{i\}}\widetilde{u}^{\omega\setminus(\sigma\cup\{i\})}\\
	  &=\mathrm{d}\left( (-1)^{(\sigma,\omega)_J}
	  \widetilde{v}^{\sigma}\widetilde{u}^{\omega\setminus\sigma} \right)
	  =\mathrm{d}\eta_J(\sigma^*_{\omega}),
	\end{align*}
	where we have used \eqref{eq:Leib} in the last two lines.
\end{proof}
   In particular, when $J$ is the constant tuple $(\bm{2})$, 
   the sign $(\sigma,\omega)_{(\bm{2})}$ was
   understood in Bosio and Meersseman \cite[Theorem 10.1]{BM06}, 
   using the intersection
   of submanifolds (see also \cite[Theorem 5.1]{Pan08}).
   \begin{exm}	\label{exm:GL13}
	 Let $K$ be the complex dual to the truncated cube in Figure \ref{fig:cube},
	 such that each vertex is labeled by the number in the center 
	 of the associated $2$-face.
	 In \cite[Theorem 3.1]{GL13}, it 
	 was proved that $H^*(R^*_K)$ and $H^*(R^*_K(\bm{2}))$ are \emph{not} 
	 isomorphic as ungraded rings, even with $\mathbb{Z}_2$ coefficients. 
	 Following their approach, here we illustrate the difference of the two
	 rings, by Theorem \ref{thm:iso}.
 	 \begin{figure}
   \begin{center}
           \includegraphics[width=7cm]{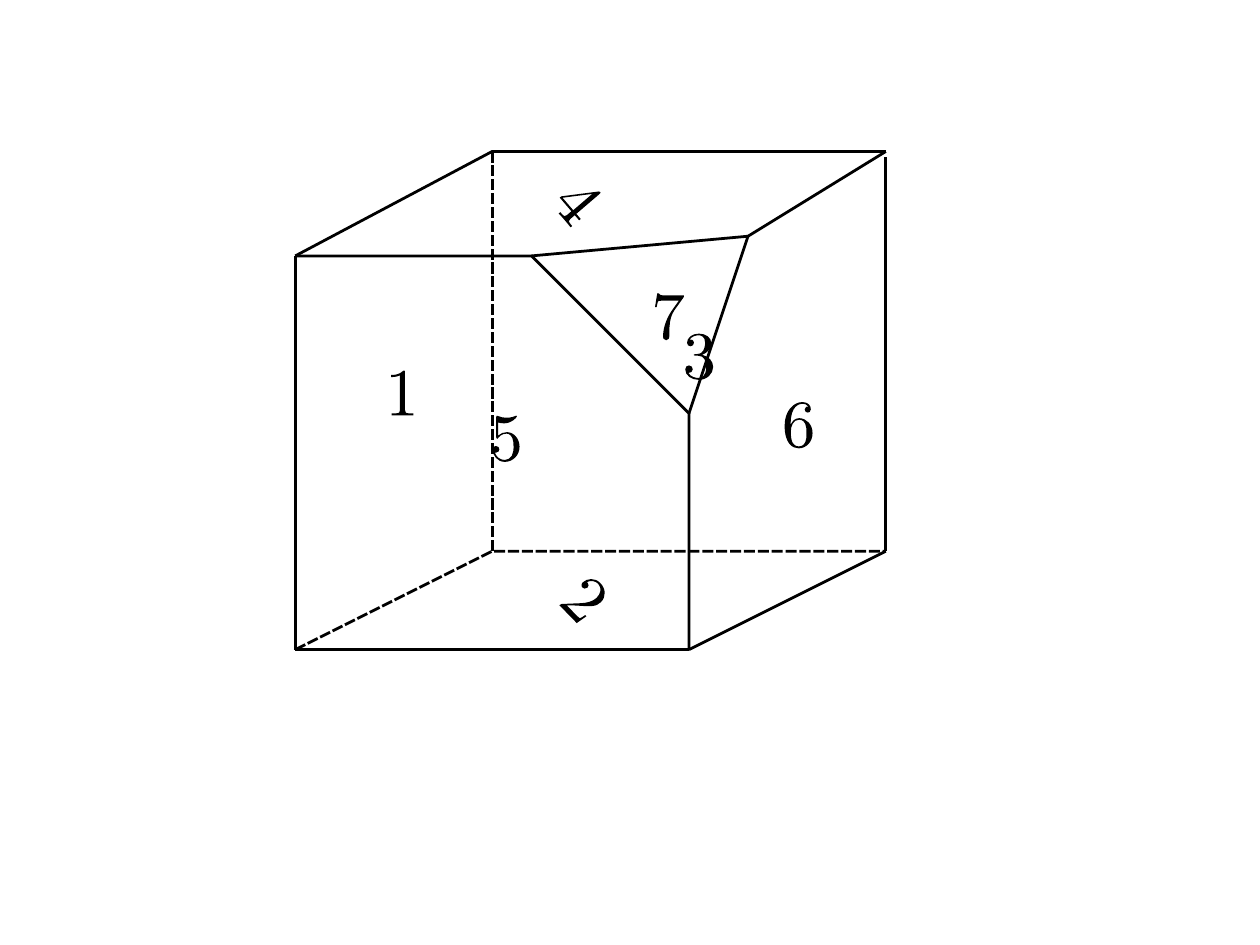}
   \end{center}
   \caption{The truncated cube}
   \label{fig:cube}
 \end{figure}
  First by \cite{GL13}, we have the homeomorphisms
  \[(D^1,S^0)^K\cong \left(S^1\times S^1\times S^1\right)\sharp
	\left(S^1\times S^1\times S^1\right)\sharp 7\left( S^1\times S^2 \right),\]
  and 
  \[ (D^2,S^1)^K\cong\partial\left(M^9_{-1}\times D^2\right)\sharp
	3\left( S^3\times S^7 \right)\sharp 3\left( S^4\times S^6 \right)\sharp
	\left( S^5\times S^5 \right)
\]
where $M_{-1}^9$ is the obtained from $S^3\times S^3\times S^3$ by 
subtracting an open ball.\footnote{$(D^1,S^0)^K$ and $(D^2,S^1)^K$ 
can be smoothed canonically to make the two homeomorphisms above into
diffeomorphisms; see \cite{GL13}.}

 It is easy to check that $\widetilde{H}^*(K_{\omega})$ is 
 non-trivial only when:
 \begin{enumerate}
   \item [(I)] $\omega=I_{1,2,3}\cup\{7\}$  
	 where $I_{1,2,3}$ can be any non-empty subsets of $\{1,2,3\}$ ($7$ cases);
	 $\omega=\{1,2,3,4,5,6\}\setminus I_{1,2,3}$ ($7$ cases);
   \item [(II)] $\omega=\{1,6,7\}\setminus I_7$, $\omega=\{2,4,7\}\setminus I_7$ and
	 $\omega=\{3,5,7\}\setminus I_7$, where $I_7=\emptyset$ or $I_7=\{7\}$ 
	 ($6$ cases); $\omega=\{1,3,5,6,7\}\setminus I_7$, 
	 $\omega=\{2,3,4,5,7\}\setminus I_7$ and $\omega=\{1,2,4,6,7\}\setminus I_7$ 
	 ($6$ cases).
 \end{enumerate} 
  First we consider $H^*(R^*_K)$.   With respect to (I), we have $7$ pairs of
  representatives $u^{7}t^{ I_{1,2,3}}$ of degree $1$ and 
  $u^{4,5}t^{\{1,2,3,6\}\setminus I_{1,2,3}}$ of degree $2$, respectively, 
  with $I_{1,2,3}$ running
  through non-empty subsets of $\{1,2,3\}$, such that 
  their product $-u^{4,5,7}t^{1,2,3,6}$ generates 
  $H^{3}(R^*_K)\cong H^3( (D^1,S^0)^K)$.  For (II), we choose representatives  
  $u^{1}t^{6,7}$, $u^{2}t^{4,7}$ and $u^{3}t^{5,7}$ of degree $1$, with their mutually 
  two-fold products $u^{1,2}t^{4,6,7}$, $u^{1,3}t^{5,6,7}$ and $u^{2,3}t^{4,5,7}$,
  respectively, together with the three-fold product $u^{1,2,3}t^{4,5,6,7}$.
  Analogously, we choose representatives 
  $u^{1}t^{6}(1-t^7)$, $u^2t^4(1-t^7)$ and 
  $u^{3}t^5(1-t^7)$, together with their mutually two-fold 
  and three-fold products. 
  It can be checked that, in this way we give a presentation of
  $H^*(R^*_K)$.   
  
  Next we turn to $H^*(R^*_{K}(\bm{2}))$. For (I), we can simply use
  the previous argument, replacing $u^i$ and $t^i$ by $\wt{v}^i$ and 
  $\wt{u}^i$, respectively. Notice that the degrees have changed 
  accordingly. For (II), however, we have to modify the representatives
  for products, since the relations $(\wt{u}^i)^2=\wt{u}^i$ do not 
  hold any longer. Therefore, any non-trivial three-fold product 
  must come from a partition of $[7]$ by three parts: two of them
  have cardinality $2$ and the remainder has cardinality $3$. For 
  instance, if we have chosen 
  $[\wt{v}^{1}\wt{u}^6]$ and $[\wt{v}^2\wt{u}^{4,7}]$ associated to 
  $\widetilde{H}^0(K_{1,6})$ and $\widetilde{H}^0(K_{2,4,7})$ 
  by $\eta_{\bm{2}}$ (see \eqref{def:etaJ}), respectively, 
  then the remainder has to be $[\wt{v}^3\wt{u}^{5}]$ associated to 
  $\widetilde{H}^0(K_{3,5})$. 
\end{exm}
\section{Proof of Theorem \ref{thm:Whitney}}\label{proof:Whitney}
We will use the same notations as in the statement of the theorem.

 Let $A^n$ be the $n$-th skeleton of $A$ 
 (we set $A^{-1}=\emptyset$), and 
let $\left(S_*(A),\partial_s\right)$ be the singular chain 
complex of $A$ with differential $\partial_s$.

\begin{proof}[Proof of the additive isomorphism]
The method here is well-known: 
it suffices to show that, 
there is a chain map 
$f_A\co \left(C_*(A),\partial\right)\to \left(S_*(A),\partial_s\right)$ such that 
the image of $C_n(A)$ generates $H_{n}(A^n,A^{n-1})$ in each dimension $n\geq 0$.
We construct $f_A$ by induction on dimension $n$. 
This is clear when $n=0$. Suppose $n>0$ and 
$f_A$ is well-defined for $k<n$, with the desired property. 
Choosing a cell $e_n$ of dimension $n$, we see
by hypothesis that the image of $\partial(e_n)$ under $f_A$ is carried by 
the boundary $\mathrm{Bd}(e_n)$, which is topologically 
an $(n-1)$--sphere. Again by hypothesis, $f_A(\partial(e_n))$ generates 
the fundamental class of $\mathrm{Bd}(e_n)$. 
Therefore, by the long exact sequence of the pair 
$\left(e_n,\mathrm{Bd}(e_n)\right)$, we 
can choose from $S_n(e_n)$ a representative of $H_n(e_n,\mathrm{Bd}(e_n))$, 
say $c_n$, such that $\partial_sc_n=f_A(\partial(e_n))$. Then we define $f_A(e_n)=c_n$,
and in this way we extend $f_A$ over all $n$-cells. Then a standard argument
of cellular homology shows that, $f_A$ and its dual induce the isomorphisms 
$H_*(C_*(A),\partial)\cong H_*(A)$ and $H^*(C^*(A),\mathrm{d})\cong H^*(A)$, 
respectively.
\end{proof}

On products, however, we have to work carefully
at the (co)chain level. 
We shall proceed with singular (co)chains, and then compare
it with simplicial ones. 

Let $\mathrm{Top}$ be the category of topological spaces, and 
let $\mathrm{Top}^m$ be the category of $m$-fold Cartesian product
spaces, with $m$-tuple of continuous maps as morphisms. Clearly 
$\mathrm{Top}^m$ is a subcategory of $\mathrm{Top}$. 
Denote by $\mathrm{C(Ab)}$  the 
category of chain complexes with chain maps as morphisms.

Let $S_*\co \mathrm{Top}\to \mathrm{C(Ab)}$ be the functor of 
singular chain complexes, assigning to a space $X$ the singular chain complex
$\left( S_*(X),\partial_s \right)$; let $S_*^m\co \mathrm{Top}^m\to \mathrm{C(Ab)}$
be the functor assigning to a product $X=\prod_{i=1}^m X_i$ the tensor product
$\left(\bigotimes_{i=1}^mS_*(X_i),\partial_{s'}\right)$ ($ \partial_{s'}$ follows 
\eqref{eq:bound1}, with suitable chains replaced).

Recall that for an object $X=\prod_{i=1}^m X_i$ from 
$\mathrm{Top}^m$, the associated \emph{Alexander-Whitney chain map} 
$T_X^m\co \left(S_*(X),\partial_s\right)\to 
\left(S_*^m(X),\partial_{s'}\right)$ is defined by 
sending each singular $p$-simplex $\sigma\co\Delta^p\to X$ 
(the vertex set of $\Delta^p$ is $\{0,1,\ldots,p\}$)
to the sum 
\begin{equation}
  \sum_{0=k_0\leq k_1\leq\ldots\leq k_m=p}\otimes_{i=1}^m\pi_i\sigma|_{[k_{i-1},k_{i}]},
  \label{def:AW}
\end{equation}
where $\pi_i$ is the projection onto the $i$-th component and 
$\sigma|_{[k_{i-1},k_{i}]}$ is the restriction of $\sigma$ to
the face spanned by $\{k_{i-1},k_{i-1}+1,\ldots, k_i\}$. We see that 
$T^m\co S_*|_{\mathrm{Top}^m}\to S_*^m$ is a 
natural transformation between the two functors. Moreover, by Eilenberg--Zilber
Theorem \cite{EZ53}, $T^m$ induces a natural chain-homotopy equivalence when
the space is specified. 

Let $\left(S^*(X),\delta_s\right)$ be the singular cochain complex 
of $X$. For $\mathbb{Z}$-modules $M_i$ with their duals
$M_i^*=\mathrm{Hom}(M_i,\mathbb{Z})$, $i=1,2,\ldots,m$, 
we denote by $\theta_m\co\bigotimes_{i=1}^mM_i^* 
\to\mathrm{Hom}\left( \bigotimes_{i=1}^m M_i,\mathbb{Z} \right)$ 
the \emph{evaluation map}, which is a homomorphism given by
\[          \theta_m(\otimes_{i=1}^m f_i)(\otimes_{i=1}^m c_{i})=
  \prod_{i=1}^mf_i(c_{i}),\]
  where $f_i\in M_i^*$ and $c_i\in M_i$. 
Consider the sequence 
  \begin{equation*}
	\begin{CD}
	S_*(X)@>d_*>>S_*(X\times X)@>T_{X\times X}^2>>S_*^2(X),
  \end{CD}
\end{equation*}
where $d_*$ is induced by the diagonal map $d\co X\to X\times X$. We refer to
the composition $\tau_X=T^2_{X\times X}d_*$  as the \emph{Alexander-Whitney diagonal
approximation}. By definition, we see that  
$\tau\co S_*\to S_*^2$ is a natural transformation between the two functors.

The cup product $\smile\co S^*(X)\otimes S^*(X)\to S^*(X)$
is defined as the composition $\tau_X^*\theta_2$, where
$\tau_X^*$ is the dual of $\tau_X$.

To formulate the cap product, we need another homomorphism 
\begin{equation}
\begindc{\commdiag}[15]
\obj(0,1)[aa]{$h\co M^*\otimes M\otimes M$}
\obj(9,1)[bb]{$M$}
\obj(0,0)[cc]{$(m^*, m_1\otimes m_2)$}
\obj(9,0)[dd]{$ m^*(m_2) m_1$,}
\mor{aa}{bb}{}
\mor{cc}{dd}{}[+1,6]
 \enddc		 
 \label{def:h}
 \end{equation}
 where $M$ is a $\mathbb{Z}$-module with its dual $M^*$.
It can be checked by a straightforward calculation that 
for $c^p\in S^{p}(X)$ and 
$c_{r}'\in S^2_r(X\times X)$, we have
\begin{equation}
  \partial_s \circ h(c^p,c_r')=(-1)^{r-p}h(\delta_s c^p,c_r')+h(c^p,\partial_{s'}c_r').
  \label{eq:h}
\end{equation}
The cap product $\frown\co S^*(X)\otimes S_*(X)\to S_*(X)$ 
is then given by  
\[c^p\frown c_r=h(c^p,\tau_X(c_r)).\]

Now we consider functors from $\mathrm{Top}^m$ to $\mathrm{C(Ab)}$. 
For a space $X=\prod_{i=1}^mX_i$, 
we have the diagram
  \begin{equation}
  \begindc{\commdiag}[22]
  \obj(0,4)[aa]{$S_*(X)$}
  \obj(8,4)[bb]{$S_*(X)\otimes S_*(X)$}
  \obj(0,0)[cc]{$S^m_*(X)$}
  \obj(8,0)[dd]{$\bigotimes_{i=1}^m\left(S_*(X_i)\otimes S_*(X_i)\right)$,}
  \obj(6,2)[ee]{$S^m_*(X)\otimes S^m_*(X)$}
  \mor{aa}{bb}{$\tau_X$}
  \mor{aa}{cc}{$T_{X}^m$}
  \mor{bb}{ee}{$T_{X}^m\otimes T_X^m$}[-1,0]
  \mor{cc}{dd}{$\otimes_{i=1}^m \tau_{X_i}$}
  \mor{dd}{ee}{$T_{Shuf}$}
\enddc\label{diag:key}
 \end{equation}
 in which the homomorphism $T_{Shuf}$  
 is given by the shuffling:  
  \begin{equation}
	T_{Shuf}\left( \otimes_{i=1}^m(c_{p_i}\otimes c_{q_i}) \right)=(-1)^{(\bm{p},\bm{q})}
  \left( \otimes_{i=1}^mc_{p_i} \right)\otimes \left( \otimes_{i=1}^mc_{q_i} \right),
  \label{def:T_S}
\end{equation}
where $\bm{p}=(p_i)_{i=1}^m$, $\bm{q}=(q_i)_{i=1}^m$, $c_{p_i}\in S_{p_i}(X_i)$ and 
$c_{q_i}\in S_{q_i}(X_i)$, $i=1,2,\ldots, m$; $(\bm{p},\bm{q})$ is given in  \eqref{def:pq}.
It can be checked directly that $T_{Shuf}$ is a chain map. We see that  
chain maps
\[\psi_X=(T_X^m\otimes T_X^m)\circ \tau_X\quad 
	\text{ and }\quad 
	\psi_X'=T_{Shuf}\circ\left(\otimes_{i=1}^m \tau_{X_i}\right)\circ T_X^m\]
can be treated as natural transformations 
$\mathrm{Top}^m\to \mathrm{C(Ab)}$ which are specified
to $X$.
Using Acyclic Model Theorem, with models 
as $2m$-product of simplices (see \cite[Section 6, Chapter 5, p.~252]{Spa66} 
for the details of the case
 $m=2$, and the proof for the general case is similar), 
 it turns out that Diagram \eqref{diag:key} commutes, 
 up to natural chain homotopy. This means that there is a natural 
 transformation $D\co\mathrm{Top}^m\to \mathrm{C(Ab)}$, 
 such that 
 \begin{equation}
   \psi_X-\psi_X'=D_X\partial_s+\partial_{s'}D_X
   \label{eq:psi}
 \end{equation}
 when $X$ is specified.
 
 \begin{lem}
   Let $c^{\bm{p}}=\otimes_{i=1}^mc^{p_i}\in \bigotimes_{i=1}^mS^{p_i}(X_i), 
   c^{\bm{q}}=\otimes_{i=1}^mc^{q_i}\in \bigotimes_{i=1}^mS^{q_i}(X_i)$ 
   be two cochains of degrees $|\bm{p}|$ and $|\bm{q}|$, 
   respectively.  Then 
   we have
   \begin{align*}
	& (T^m_X)^*\theta_m(c^{\bm{p}})\smile  (T^m_X)^*\theta_m(c^{\bm{q}})-
	 (-1)^{(\bm{p},\bm{q})}(T^m_X)^*
	 \theta_m\left( \otimes_{i=1}^m c^{p_i}\smile c^{q_i} \right) 
	 \numberthis\label{eq:cupf1}\\
	 =&
\delta_sB_1\left( c^{\bm{p}},c^{\bm{q}} \right)+
B_2\left( \delta_{s'}c^{\bm{p}},c^{\bm{q}} \right)+
B_3\left( c^{\bm{p}},\delta_{s'}c^{\bm{q}} \right)
   \end{align*}
where $(T^m_X)^*$ is the dual of 
$T^m_X$, $\delta_{s'}$ the coboundary operator of $\bigotimes_{i=1}^mS^*(X_i)$, 
and $B_i\co S_*^m(X)\otimes S_*^m(X)\to S_*(X)$ 
are three bilinear forms associated to $|\bm{p}|$ and $|\bm{q}|$, 
$i=1,2,3$. Additionally, given 
$c_{\bm{r}}=\otimes_{i=1}^m c_{r_i}$, $c_{r_i}\in S_{r_i}(X_i)$, 
we have
\begin{align*}
  &(T^m_X)^*\theta_m(c^{\bm{p}})\frown (T^{m}_X)^{-1}(c_{\bm{r}})
  -(-1)^{(\bm{r}-\bm{p},\bm{p})}(T_X^m)^{-1}\left( \otimes_{i=1}^mc^{p_i}\frown c_{r_i}
  \right)\numberthis \label{eq:capf1}\\
  =&\partial_s B_{1}'\left( c^{\bm{p}},c_{\bm{r}} \right)+
	 B_{2}'\left( \delta_{s'} c^{\bm{p}},c_{\bm{r}} \right)+
	 B_{3}'\left( c^{\bm{p}},\partial_{s'}c_{\bm{r}} \right)
\end{align*}
with three bilinear forms
$B_i'\co \bigotimes_{i=1}^mS^*(X_i)\otimes S_*^m(X)\to S_*^m(X)$ 
(with respect to $|\bm{p}|$, $|\bm{q}|$), $(T_{X}^m)^{-1}$  
the chain-homotopy inverse of $T^m_X$.
\end{lem}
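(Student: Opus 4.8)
The plan is to derive both identities from the single chain-homotopy relation \eqref{eq:psi}, by pairing it against the cochain factors $\theta_m(c^{\bm{p}})$ and $\theta_m(c^{\bm{q}})$ and rewriting everything in terms of the two natural transformations $\psi_X=(T^m_X\otimes T^m_X)\tau_X$ and $\psi_X'=T_{Shuf}(\otimes_i\tau_{X_i})T^m_X$. The bilinear forms $B_i$ and $B_i'$ will be assembled from the chain homotopy $D_X$ of \eqref{eq:psi} together with the homotopies realizing $T^m_X(T^m_X)^{-1}\simeq\mathrm{id}$; the three-term shape on the right-hand sides is exactly what the Leibniz-type rules for the (co)boundary operators produce when one dualizes such homotopies.

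For the cup formula \eqref{eq:cupf1}, I would first note that, by the definition of the cup product as $\tau_X^*\theta_2$, evaluating the left-hand product $(T^m_X)^*\theta_m(c^{\bm{p}})\smile(T^m_X)^*\theta_m(c^{\bm{q}})$ on a chain $z$ amounts to pairing $\theta_m(c^{\bm{p}})\otimes\theta_m(c^{\bm{q}})$ against $\psi_X(z)=(T^m_X\otimes T^m_X)\tau_X(z)$. On the other side, each factor-wise product $c^{p_i}\smile c^{q_i}$ is $\tau_{X_i}^*\theta_2(c^{p_i}\otimes c^{q_i})$, so $(T^m_X)^*\theta_m(\otimes_i c^{p_i}\smile c^{q_i})$ pairs the same cochain against $(\otimes_i\tau_{X_i})T^m_X(z)$, and passage through the shuffle $T_{Shuf}$ (see \eqref{def:T_S}) contributes precisely the sign $(-1)^{(\bm{p},\bm{q})}$, so that the shifted right-hand side pairs against $\psi_X'(z)$. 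Subtracting and invoking \eqref{eq:psi}, the difference becomes the pairing of $\theta_m(c^{\bm{p}})\otimes\theta_m(c^{\bm{q}})$ against $(D_X\partial_s+\partial_{s'}D_X)(z)$. Dualizing $D_X$ produces $B_1$ and the coboundary term $\delta_s B_1$; the remaining contribution $\partial_{s'}D_X$, paired against a tensor product of cochains, splits by the Leibniz rule into the two terms carrying $\delta_{s'}c^{\bm{p}}$ and $\delta_{s'}c^{\bm{q}}$, which I would package as $B_2$ and $B_3$.

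The cap formula \eqref{eq:capf1} follows the same scheme, now using $c^p\frown c_r=h(c^p,\tau_X(c_r))$ and the chain-homotopy inverse $(T^m_X)^{-1}$. The key observation is that $h$ commutes, up to the homotopy $T^m_X(T^m_X)^{-1}\simeq\mathrm{id}$, with the evaluation $(T^m_X)^*$ on the cochain slot and with $(T^m_X)^{-1}$ on the chain slot; applying this to $\tau_X(T^m_X)^{-1}(c_{\bm{r}})$ and replacing $(T^m_X\otimes T^m_X)\tau_X(T^m_X)^{-1}$ by $\psi_X'(T^m_X)^{-1}$ via \eqref{eq:psi} turns the left-hand side into the shuffled factor-wise cap $\otimes_i c^{p_i}\frown c_{r_i}$. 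Since $h$ evaluates the cochain against the second tensor factor, the shuffle acts on the splitting of each $\tau_{X_i}(c_{r_i})$ into a first factor of degree $r_i-p_i$ and a second factor of degree $p_i$, so the sign is exactly $(-1)^{(\bm{r}-\bm{p},\bm{p})}$, matching \eqref{eq:cap}. The error terms arising from $D_X$ and from the homotopies for $(T^m_X)^{-1}$ then assemble, via the sign rule \eqref{eq:h}, into $\partial_s B_1'$ together with the two contributions $B_2'(\delta_{s'}c^{\bm{p}},c_{\bm{r}})$ and $B_3'(c^{\bm{p}},\partial_{s'}c_{\bm{r}})$.

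The main obstacle will be the cap case, where $(T^m_X)^{-1}$ is only a chain-homotopy inverse rather than an honest inverse: I would need to collect the homotopy corrections coming from $D_X$, from $T^m_X(T^m_X)^{-1}\simeq\mathrm{id}$, and from \eqref{eq:h}, and verify that they combine into exactly three bilinear forms of the stated type without leaving further boundary-level discrepancies. Checking that the two shuffle signs land on $(\bm{p},\bm{q})$ and $(\bm{r}-\bm{p},\bm{p})$ respectively is the one place where the bookkeeping cannot be avoided, but it is a finite sign computation governed entirely by \eqref{def:T_S} and \eqref{def:pq}.
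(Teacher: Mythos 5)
Your proposal is correct and takes essentially the same route as the paper: both prove \eqref{eq:cupf1} by dualizing the chain homotopy \eqref{eq:psi} against $\theta_{2m}(c^{\bm{p}}\otimes c^{\bm{q}})$ and splitting the $\partial_{s'}D_X$ term via the Leibniz rule into the $B_2$, $B_3$ contributions, and both prove \eqref{eq:capf1} by inserting the chain-homotopy inverses of $T^m_X$, pushing $T^m_X$ through $h$, applying \eqref{eq:psi} together with \eqref{eq:h} to organize the correction terms, and extracting the sign $(-1)^{(\bm{r}-\bm{p},\bm{p})}$ by expanding $\otimes_{i=1}^m\tau_{X_i}(c_{r_i})$ via \eqref{def:AW} and matching degrees. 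The only cosmetic difference is that where you say $h$ commutes with $T^m_X$ ``up to homotopy,'' the paper uses the exact identity $T^m_X\,h\left( (T^m_X)^*\theta_m(c^{\bm{p}}),z \right)=h\left( \theta_m(c^{\bm{p}}),(T^m_X\otimes T^m_X)z \right)$, with homotopy corrections arising only from $(T^m_X)^{-1}T^m_X\simeq\mathrm{id}$, $T^m_X(T^m_X)^{-1}\simeq\mathrm{id}$ and \eqref{eq:psi}, exactly the three sources you list.
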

 \begin{proof}
   This follows by a diagram chasing on Diagram \eqref{diag:key}. Let 
   $c^{\bm{p},\bm{q}}$ denote the cochain $c^{\bm{p}}\otimes c^{\bm{q}}$. 
   By assumption, we have
   \begin{align*}
	 \psi_X^*\left(\theta_{2m}(c^{\bm{p},\bm{q}})\right)&= \tau_X^* 
	 \theta_2\left( (T^m_X)^*\theta_m(c^{\bm{p}})
	 \otimes(T^m_X)^*\theta_m(c^{\bm{q}})\right) \\
	 &= 
	 (T^m_X)^*\theta_m(c^{\bm{p}})\smile  (T^m_X)^*\theta_m(c^{\bm{q}}),\\
   		 (\psi'_X)^*\left(\theta_{2m}(c^{\bm{p},\bm{q}})\right)&=
	 (T^m_X)^*\theta_m\left((-1)^{(\bm{p},\bm{q})}\otimes_{i=1}^m\tau_{X_i}^*
	 \theta_2(c^{p_i}\otimes c^{q_i})\right)\\ 	 
	 &=(-1)^{(\bm{p},\bm{q})}(T^m_X)^*
\theta_m\left( \otimes_{i=1}^m c^{p_i}\smile c^{q_i} \right).
   \end{align*}
   Then \eqref{eq:cupf1} follows from 
   a comparison of the two equations above, together with \eqref{eq:psi}.
   It remains to prove \eqref{eq:capf1}. It can be checked directly that
\begin{align*}
     (T^m_X)^*\theta_m(c^{\bm{p}})\frown (T^{m}_X)^{-1}(c_{\bm{r}})=& 
	 h\left( (T_X^m)^*\theta_m(c^{\bm{p}}),\tau_X(T_X^m)^{-1}(c_{\bm{r}}) \right)
	 \\
	 =&(T_X^m)^{-1}T_X^m
	 h\left( (T_X^m)^*\theta_m(c^{\bm{p}}),\tau_X(T_X^m)^{-1}(c_{\bm{r}}) \right)+
	 \\&\partial_s B_{11}'\left( c^{\bm{p}},c_{\bm{r}} \right)+
	 B_{12}'\left( \delta_{s'} c^{\bm{p}},c_{\bm{r}} \right)+
	 B_{13}'\left( c^{\bm{p}},\partial_{s'}c_{\bm{r}} \right),
  \end{align*}
  where bilinear forms $B_{1i}'$ comes from the chain-homotopy between
  $(T_X^m)^{-1}T_X^m$ and the identity of $S_*(X)$, $i=1,2,3$, together with
  \eqref{eq:h}.
  On the other hand, again by \eqref{eq:psi},
  we see that
  \begin{align*}
	&T_X^m
	 h\left( (T_X^m)^*\theta_m(c^{\bm{p}}),\tau_X(T_X^m)^{-1}(c_{\bm{r}}) \right)
	 =h\left( \theta_m(c^{\bm{p}}),
	 \left( T_X^m\otimes T_X^m \right)\tau_X(T_X^m)^{-1}(c_{\bm{r}}) \right)\\
	 =&h\left( \theta_m(c^{\bm{p}}),
	 T_{Shuf}\left( \otimes_{i=1}^m\tau_{X_i} \right)(c_{\bm{r}}) \right)+
	 \partial_s B_{21}'\left( c^{\bm{p}},c_{\bm{r}} \right)+
	 B_{22}'\left( \delta_{s'} c^{\bm{p}},c_{\bm{r}} \right)+\\
	 &B_{23}'\left( c^{\bm{p}},\partial_{s'}c_{\bm{r}} \right).
  \end{align*}
  By expanding $\left( \otimes_{i=1}^m\tau_{X_i} \right)(c_{\bm{r}})$ 
  with \eqref{def:AW}
  and then matching the degrees, we have
  \[h\left( \theta_m(c^{\bm{p}}),
	 T_{Shuf}\left( \otimes_{i=1}^m\tau_{X_i} \right)(c_{\bm{r}}) \right)=
	 (-1)^{(\bm{r}-\bm{p},\bm{p})}\otimes_{i=1}^mc^{p_i}\frown c_{r_i}.\]
	 Clearly \eqref{eq:capf1} follows from the equations above. 
 \end{proof}

Now let $\left( X,A \right)$ be the pair in the statement of Theorem
\ref{thm:Whitney}. Additionally, 
suppose that each $K_i$ is given a partial ordering on the vertex set, 
which induces a total ordering on each simplex. 
To complete the proof, 
it remains to compare the singular (co)chain complex with the simplicial one.

Recall that there is a chain-homotopy equivalence 
$\iota_i\co C_*(K_i)\to S_*(|K|)$
for each $i=1,2,\ldots,m$, such that $\iota_i$ sends
each $p$-simplex $[v_0,v_1,\ldots, v_p]$ to the singular
simplex linearly spanned by associated vertices, say $l(v_0,v_1,\ldots,v_p)$, 
with $v_0<v_1<\ldots<v_p$ in the given ordering; moreover, the chain-homotopy inverse 
$\iota_i^{-1}$ satisfies 
$\iota_i^{-1}(l(v_0,v_1,\ldots,v_p))=[v_0,v_1,\ldots,v_p]$ 
(see \cite[Theorem 34.3, pp.~194--195]{Mun84}). 
It can be checked that  diagrams
\begin{equation}
  \begin{CD}
	S^*(|K_i|)\otimes S^*(|K_i|)@>\smile>> S^*(|K_i|)\\
	@V V\iota^*_i\otimes\iota^*_iV  @V V\iota^*_iV\\
	C^*(K_i) \otimes C^*(K)@>\smile>> C^*(K_i)
  \end{CD}
\   \text{and} \
\begin{CD}
	S^*(|K_i|)\otimes S_*(|K_i|)@>\frown>> S_*(|K_i|)\\
	@V V\iota^*_i\otimes\iota^{-1}_iV  @V V\iota^{-1}_iV\\
	C^*(K_i) \otimes C_*(K_i)@>\frown>> C_*(K_i)
  \end{CD}
  \label{diag:cupcapf}
\end{equation}
commute, where the simplicial cup and cap products in the bottom rows 
are given by \eqref{def:cup0} and \eqref{def:cap0}, 
respectively. 

It follows that the map
$\iota\co \left(C_*(X),\partial\right)\to
\left(S_*^m(X),\partial_{s'}\right)$,  $\iota=\otimes_{i=1}^m\iota_i$,
is a chain-homotopy equivalence with its inverse 
$\iota^{-1}=\otimes \iota_i^{-1}$.\footnote{This 
  is well-known: suppose $D_i$ is 
  the chain-homotopy between $\alpha_i=\iota^{-1}_i\iota_i$ and the identity $\mathrm{id}_i$ 
  of $C_*(K_i)$, one can check that the homomorphism generated by
  \[\wt{D}(c_{\bm{r}})=\sum_{i=1}^m (-1)^{\sum_{k<i}r_k}\alpha_1(c_{r_1})
   \otimes\ldots\otimes\alpha_{i-1}(c_{r_{i-1}})\otimes D_i(c_{r_i})\otimes 
   \mathrm{id}_{i+1}(c_{r_{i+1}})\ldots\otimes\mathrm{id}_{m}(c_{r_m}),\]
   in which $c_{\bm{r}}=\otimes_{i=1}^mc_{r_i}$ ($c_{p_i}\in C_{r_i}(K_i)$), 
 gives the desired homotopy between $\iota$ and the identity of $C_*(X)$. 
 Similarly, we can define the chain-homotopy between 
 $\iota\iota^{-1}$ and the identity of $S_*^m(X)$.}
\begin{proof}[Proof of Theorem \ref{thm:Whitney}]
 Let $\upsilon_c\co C_*(A)\to 
 C_*(X)$ and 
 $\upsilon_{s}\co S_*(A)\to 
S_*(X)$ be the chain maps induced by 
 the inclusion $A\to X$, respectively, and let 
 $\upsilon^{*}_c$ and $\upsilon^*_s$ be their duals. 
 We see that $\upsilon^{*}_c$ sends
 a dual basis $\otimes_{i=1}^m\sigma^*_{p_i}$ to the one of 
 the same form, if $\prod_{i=1}^m|\sigma_{p_i}|\subset A$, and 
 to zero otherwise (when using the dual basis of the form
 above, we have used
 the evaluation map $\theta_m$ implicitly). By definition,
 $\upsilon_c^*$ preserves the cup products given 
 in \eqref{eq:cup}.

  Consider the composition 
  $(T^m)^{-1}\iota\upsilon_c\co C_*(A)\to S_*(X)$, 
  by the  naturality of $(T^m)^{-1}$, it gives
  rise to a chain map
  $C_*(A)\to S_*(A)$ 
  (since the image of $S_*^m(\prod_{i=1}^m|\sigma_{p_i}|)$ under 
  $(T^m)^{-1}$ lies in $S_*(\prod_{i=1}^m|\sigma_{p_i}|)$). 
  Thus we define \[f_A=(T^m)^{-1}\iota\upsilon_c;\] 
  it can be checked that $f_A$ satisfies the desired property in the proof 
 of the additive isomorphism $H_*(C_*(A),\partial)\cong H_*(A)$. 
 The argument above shows that the 
 chain-homotopy inverse of $f_A$, say $f_A^{-1}$, is then given
 by $\iota^{-1} T^m_X\upsilon_s\co S_*(A)
 \to C_*(A)$, whose dual
 \[(f_A^{-1})^*=\upsilon_s^*(T_X^m)^*(\iota^{-1})^*\co C^*(A) \to S^*(A)\]
 induces the isomorphism $H^*(C^*(A),\mathrm{d})\cong H^*(A) $. 
 Notice that $(f_A^{-1})^*$ can be defined over $C^*(X)$, 
 which we shall denote by $(\widetilde{f}_A^{-1})^*$: 
 we have 
 \begin{equation}
   (f_A^{-1})^*\upsilon_c^*=(\widetilde{f}_A^{-1})^*.
   \label{eq:pushout}
 \end{equation}
 Now we choose cochains $c^{\bm{p}}=\otimes_{i=1}^mc^{p_i}\in C^{|\bm{p}|}(X)$ 
 and $c^{\bm{q}}=\otimes_{i=1}^mc^{q_i}\in C^{|\bm{q}|}(X)$,
 where $c^{p_i}\in C^{p_i}(|K_i|)$ and $c^{q_i}\in C^{q_i}(|K_i|)$.
  By \eqref{eq:cupf1} and Diagram \eqref{diag:cupcapf}, we have 
 \begin{align*}
   & (f_A^{-1})^*\upsilon_c^*(c^{\bm{p}})\smile  
   (f_A^{-1})^*\upsilon_c^*(c^{\bm{q}})
   -(-1)^{(\bm{p},\bm{q})}
   (f_A^{-1})^*\upsilon_c^*\left( \otimes_{i=1}^m c^{p_i}
	 \smile c^{q_i} \right) \\=
	 & (\wt{f}_A^{-1})^*(c^{\bm{p}})\smile  
   (\wt{f}_A^{-1})^*(c^{\bm{q}})
   -(-1)^{(\bm{p},\bm{q})}
   (\wt{f}_A^{-1})^*\left( \otimes_{i=1}^m c^{p_i}
	 \smile c^{q_i} \right) \\=
	& \upsilon_s^*\delta_sB_1\left( (\iota^{-1})^*(c^{\bm{p}}),
	(\iota^{-1})^*(c^{\bm{q}}) \right)+
	\upsilon_s^*B_2\left( \delta_{s'}(\iota^{-1})^*(c^{\bm{p}}),
	(\iota^{-1})^*(c^{\bm{q}}) \right)+\\
&\upsilon_s^*B_3\left( (\iota^{-1})^*(c^{\bm{p}}),\delta_{s'}(\iota^{-1})^*(c^{\bm{q}}) \right);
 \end{align*}
 due to the naturality, the summation in the 
 last line above can be written as
 \begin{align*}
& \delta_s\upsilon_s^*B_1\left( (\iota^{-1})^*\upsilon_c^*(c^{\bm{p}}),
	(\iota^{-1})^*\upsilon_c^*(c^{\bm{q}}) \right)+
\upsilon_s^*B_2\left( (\iota^{-1})^*\upsilon_c^*(\mathrm{d}c^{\bm{p}}),
(\iota^{-1})^*\upsilon_c^*(c^{\bm{q}}) \right)+ \numberthis\label{eq:win1}\\
&\upsilon_s^*B_3\left( (\iota^{-1})^*\upsilon_c^*(c^{\bm{p}}),
(\iota^{-1})^*\upsilon_c^*(\mathrm{d}c^{\bm{q}}) \right).
 \end{align*}
 Therefore, we have proved part on cup products, since $\upsilon_c^*$ is 
 surjective, and \eqref{eq:win1} will vanish on passage to cohomology (notice
 that $\upsilon_c^*(c^*)\in C^{*}(A)$ is closed 
 means that $\upsilon_c^*(\mathrm{d}c^*)=0$, while $\mathrm{d}c^*$ 
 may not vanish in $C^*(X)$).

 Now we prove the part on cap products. Suppose
 $c_{\bm{r}}=\otimes_{i=1}^mc_{r_i}\in C_{|\bm{r}|}(X)$, $c_{r_i}\in C_{r_i}(|K_i|)$,
 such that $c_{\bm{r}}$ is, under $\upsilon_c$, 
 the image of a chain of the same form in $C_*(A)$, which we 
 shall denote as $\underline{c}_{\bm{r}}$.  Then by definition \eqref{def:cap0},
 we see that under $\upsilon_c$, 
 $\otimes_{i=1}^mc^{p_i}\frown c_{r_i}$ has a unique preimage 
 $\underline{\otimes_{i=1}^mc^{p_i}\frown c_{r_i}}\in C_*(A)$.
 Analogously to the previous case, by \eqref{eq:capf1} and the naturality, 
 we have
 \begin{align*}
   &(f_A^{-1})^*\upsilon_c^*(c^{\bm{p}})\frown f_A(\underline{c}_{\bm{r}})
   -(-1)^{(\bm{r}-\bm{p},\bm{p})}f_A\left( \underline{\otimes_{i=1}^mc^{p_i}\frown c_{r_i}}
  \right)\\
  =&(\wt{f}_A^{-1})^*(c^{\bm{p}})\frown (T^m_X)^{-1}\iota(c_{\bm{r}})
   -(-1)^{(\bm{r}-\bm{p},\bm{p})}(T^m_X)^{-1}\iota
   \left( \otimes_{i=1}^mc^{p_i}\frown c_{r_i}
  \right)\\
  =&\partial_s B_{1}'\left( (\iota^{-1})^*(c^{\bm{p}}),
  \iota(c_{\bm{r}}) \right)+
	 B_{2}'\left( \delta_{s'}(\iota^{-1})^*(c^{\bm{p}}),
	 \iota(c_{\bm{r}}) \right)+
	\\ 
	& B_{3}'\left( (\iota^{-1})^*(c^{\bm{p}}),
	\partial_{s'}\iota(c_{\bm{r}}) \right)\\
	=&\partial_s B_{1}'\left( (\iota^{-1})^*\upsilon_c^*(c^{\bm{p}}),\iota(c_{\bm{r}}) \right)+
	B_{2}'\left( (\iota^{-1})^*\upsilon_c^*(\mathrm{d}c^{\bm{p}}),
	 \iota(c_{\bm{r}}) \right)+
	\\ 
	& B_{3}'\left( (\iota^{-1})^*\upsilon_c^*(c^{\bm{p}}),
	\iota(\partial c_{\bm{r}}) \right),
\end{align*}
where the summation in the last line shall vanish on passage to (co)homology.
\end{proof}


\end{document}